\documentclass[11pt]{article}
\title{Gaussian limits for traces of circular beta-ensembles}
\date{}
\author{Yun Li}

\usepackage{amsmath, amsthm, amssymb}
\usepackage{graphicx,bm}
\usepackage{amsmath, enumerate}
\usepackage{color, url}
\usepackage{hyperref}
    \newtheorem{theorem}{Theorem}
    \newtheorem{thmx}{Theorem}

    \newtheorem{proposition}[theorem]{Proposition}
    
    \newtheorem{claim}[theorem]{Claim}

\theoremstyle{definition} % For roman text in the body
    \newtheorem{definition}[theorem]{Definition}
    \newtheorem{remark}[theorem]{Remark}
    
    \newtheorem{assumption}{Assumption}

\newcommand{\R}{{\mathbb R}}
\newcommand{\CC}{{\mathbb C}}

\newcommand{\ev}{{\rm   E}}
\newcommand{\pr}{\mbox{\rm P}}
\newcommand{\lstar}{{\raise-0.15ex\hbox{$\scriptstyle \ast$}}}

\theoremstyle{remark} % For an italic header, more subtle than definition style

\definecolor{violet}{rgb}{0.8,0,0.2}

\newcommand{\ed}{\stackrel{d}{=}}

\newcommand{\cC}{{\mathcal C}}

\newcommand{\D}{{\mathbb D}}

\newcommand{\cD}{{\mathcal D}}

\newcommand{\cL}{{\mathcal L}}
\newcommand{\cM}{{\mathcal M}}

\newcommand{\mat}[4]{\left( \hspace{-0.4em} \begin{array}{cc}
#1 & #2  \\
#3 & #4  \\
\end{array} \hspace{-0.4em}\right)}

\newcommand{\tr}{\operatorname{tr}}
\newcommand{\ind}{\mathbf 1}
\newcommand{\ol}[1]{\overline{#1} \!\,} %overline

\definecolor{darkgreen}{rgb}{.2,0.4,0.3}

\newcommand{\tl}{\tilde}

\newcommand{\spec}{\operatorname{spec}}

\newcommand{\Circ}{\mathsf{Circ}}

\begin{document}
\maketitle

\begin{abstract}
    Consider the circular beta-ensembles, Jiang and Matsumoto computed the moments of traces, and proved central limit theorems for polynomial linear statistics using Jack functions. 
    We give a direct proof of the Gaussian limits for the traces of powers of circular beta-ensembles using the theory of orthogonal polynomials on the unit circle and Stein's method. The proof relies on an explicit coupling of the Verblunsky coefficients of circular beta-ensembles, which also allow us to obtain almost sure and $L^2$ convergence.  
\end{abstract}

\section{Introduction}
The size $n$ circular beta-ensemble with parameter $\beta>0$ is the distribution of $n$ points $\{e^{i\theta_j}, 1\le j\le n\}$ on the unit circle with  joint probability density
\begin{equation}\label{eq:cbe}
\frac{1}{Z_{n,\beta}}\prod_{j<k\le n}|e^{i\theta_j}-e^{i\theta_k}|^\beta, \qquad \theta_j\in [-\pi,\pi),
\end{equation}
where $Z_{n,\beta}$ is an explicitly normalizing constant.
When $\beta=2$, this recovers the joint eigenvalue density function of a Haar unitary matrix. Due to the structure of the squared Vandermonde, this ensemble is determinantal \cite{AGZ,HKPV}, i.e.~the finite dimensional marginal densities can be expressed as determinants of certain kernel function. The case when $\beta=1,4$ correspond to symmetric/self-dual unitary matrices. 

For general $\beta>0$, these random points $\{e^{i\theta_j},1\le j\le n\}$ can be viewed as one-component log-gas confined on the unit circle at temperature $\beta^{-1}$. There is no natural invariant matrix model for general $\beta$. However, using the theory of orthogonal polynomials on the unit circle (OPUC), Killip and Nenciu \cite{KillipNenciu} constructed a family of sparse random unitary matrix models $\{\Circ_{n,\beta},n\ge 1\}$ in terms of the so-called Verblunsky coefficients. The idea can be summarized as follows. 

Suppose that $\mu$ is a discrete probability  measure on the unit circle $\partial \D$ with a finite support of $n$ points. Then $\mu$ is encoded with its monic orthogonal polynomials, which satisfy the famous Szeg\H{o} recursion (see \eqref{eq:Szego1} below). The recursion can be parameterized with the Verblunsky coefficients, a finite collection of complex numbers $\alpha_0, \dots, \alpha_{n-1}$. In \cite{CMV} Cantero, Moral, and Velasquez provided a construction for a sparse $n\times n$ unitary matrix (called the {CMV matrix})
\[
\cC=\cC(\alpha_0,\dots, \alpha_{n-1}),
\]
in terms of the Verblunsky coefficients, so that the spectral measure  of $\cC$ with respect to the unit vector $\mathbf{e}_1=(1,0,\dots,0)^{\dag}$ is exactly $\mu$. 
Using this idea, together with a Jacobian computation between the spectral information of $\mu$ and the Verblunsky coefficients, Killip and Nenciu proved that the joint eigenvalue distribution of $\Circ_{n,\beta}$ is given by \eqref{eq:cbe}.
So we will still refer to these random particles $\{e^{i\theta_j},1\le j\le n\}$ as eigenvalues, and we write $\Lambda_n=\{\lambda_1,\dots,\lambda_n\}\sim \text{C$\beta$E}_n$ if the joint density is  \eqref{eq:cbe}. 

In \cite{JM15} Jiang and Matsumoto derived the central limit theorems (CLTs) for the linear statistics of polynomials of circular beta-ensemble. The $\beta=2$ case was proved by Diaconis and Shahshahani \cite{DS94} using the connection to representation theory. Throughout the paper, we denote $Z\sim\CC N(0,\sigma^2)$ if the real and complex parts of $Z$ are independent real normal random variables with mean $0$ and variance $\sigma^2/2$. 
\begin{thmx}[\cite{DS94,JM15}]\label{thm:clt1}
    Fix $\beta>0$ and let $\Lambda_n=\{\lambda_1,\dots,\lambda_n\}\sim \textup{C$\beta$E}_n$. For any fixed $k$, the random vector
    \begin{equation}\label{eq:clt}
    \left(\sum_{i=1}^n\lambda_i, \sum_{i=1}^n\lambda_i^2, \cdots \sum_{i=1}^n\lambda_i^k\right)\Rightarrow \left(\xi_1,\xi_2,\dots,\xi_k\right)
    \end{equation}
    weakly as $n\to\infty$, where $\xi_j$'s are independent complex normal random variables with $\xi_j\sim\CC N(0,\frac{2}{\beta}j)$ for each $j$. 
\end{thmx}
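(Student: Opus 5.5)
We will prove Theorem~\ref{thm:clt1} through the Killip--Nenciu model, as the abstract indicates. The starting point is that $\sum_i\lambda_i^j=\tr(\cC^j)$, where $\cC=\cC(\alpha_0,\dots,\alpha_{n-1})$ is the CMV matrix. The Verblunsky coefficients are independent. For $k\le n-2$, $\alpha_k$ is rotation invariant with $|\alpha_k|^2\sim\mathrm{Beta}(1,\tfrac{\beta}{2}(n-k-1))$, while $\alpha_{n-1}$ is uniform on $\partial\D$.

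\textbf{Step 1: an explicit trace formula.} Since $\cC=\cL\cM$ is a product of two block-diagonal matrices of $2\times2$ blocks, $\cC$ is five-diagonal. Hence $\tr(\cC^j)$ is a finite sum over closed lattice paths of length $j$. Each diagonal entry $\cC_{mm}$ is a polynomial in $\alpha_{m-2},\alpha_{m-1},\bar\alpha_{m-1},\rho_{m-1}$ and similar quantities, where $\rho=\sqrt{1-|\alpha|^2}$. Consequently
\[
\tr(\cC^j)=\sum_{m} F_j(\alpha_{m-j},\dots,\alpha_{m+j}),
\]
a sum of local functions. Each local function has a leading term linear in the $\alpha$'s plus higher-order terms.

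\textbf{Step 2: localisation of the Verblunsky coefficients.} We have $\ev|\alpha_m|^2=\frac{1}{1+\frac{\beta}{2}(n-m-1)}$. The terms with $n-m$ bounded therefore contribute $O(1)$ random variables and are not negligible. This is where the structure is simplest if we reverse indices and write $\gamma_\ell=\alpha_{n-1-\ell}$. Then $\gamma_\ell$ has $|\gamma_\ell|^2\sim\mathrm{Beta}(1,\tfrac{\beta}{2}\ell)$, a law independent of $n$.

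This suggests the explicit coupling. Take a single infinite sequence of independent rotation-invariant $\gamma_0,\gamma_1,\dots$ with these laws, and build every $\cC_n$ from $(\gamma_{n-1},\dots,\gamma_0)$. We then show that $\tr(\cC_n^j)$ converges almost surely and in $L^2$ to
\[
\xi_j=\sum_{\ell\ge0}G_j(\gamma_\ell,\dots,\gamma_{\ell+j}).
\]
The sum converges because the linear terms form an $L^2$-convergent series of independent mean-zero variables, with $\sum_\ell\ev|\gamma_\ell|^2\,\ell^{-1}$-type decay. Higher-order terms have variance $O(\ell^{-2})$ and are summable. Boundary terms at small $m$ (large $\ell\approx n$) vanish in $L^2$. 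Rotation invariance of each $\gamma$ makes every monomial with unequal numbers of $\gamma$ and $\bar\gamma$ have mean zero. Joint rotation $\gamma_\ell\mapsto e^{i\phi}\gamma_\ell$ sends $\xi_j\mapsto e^{ij\phi}\xi_j$, which gives circular symmetry of each limit and orthogonality $\ev\xi_j\bar\xi_{j'}=0$ for $j\neq j'$.

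\textbf{Step 3: Gaussianity, the main obstacle.} The limit $\xi_j$ is an infinite sum of $j$-dependent terms, but the $\gamma_\ell$ for small $\ell$ are not small. For example, $\gamma_0$ is uniform on the circle, so no CLT-type asymptotic regime is available and Gaussianity must come from an exact identity. Our plan is to prove that the finite-$n$ traces are asymptotically Gaussian via Stein's method. We will take exchangeable pairs obtained by resampling one coefficient $\alpha_m$ chosen uniformly at random. Because the trace is a sum of local functions, the regression condition $\ev[W'-W\mid W]=-\lambda W+R$ holds with $\lambda\approx 1/n$. We will then bound $\ev|R|$ and the variance of the conditional second moment $\ev[|W'-W|^2\mid\cdot]$, aiming for bounds of order $o(1)$, and run the multivariate complex Stein bound for the vector $(W_1,\dots,W_k)$.

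The variance is identified exactly as follows. Since $\cC_n$ has the $\beta$-ensemble eigenvalue law (Killip--Nenciu), $\ev|\tr\cC_n^j|^2$ can alternatively be computed from Step 2 as $\sum_\ell$ of explicit Beta moments. We will verify that this telescopes to $\frac{2j}{\beta}+o(1)$. For $j=1$, for instance, the leading term gives $\sum_\ell \ev|\gamma_\ell|^2|\rho|^2$-type products whose sum is $2/\beta$.

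We expect the hardest step to be controlling the Stein remainder coming from the indices $\ell$ of order $1$. We would first handle these by conditioning on them and showing their contribution to $\lambda^{-1}\ev|R|$ is $O(1/n)$, since only $O(1)$ of the $n$ resampled indices touch them. Independence across $j$ then follows from the vanishing cross-covariances established in Step 2.
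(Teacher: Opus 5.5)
\paragraph{The gap is in Step~3.} You correctly note that Gaussianity of the limit must come from an exact identity. The tool you then choose, exchangeable pairs built by resampling one coefficient with $o(1)$ error bounds, is a small-summand method. It cannot give vanishing bounds here, because a fixed fraction of the limit is carried by finitely many non-small coefficients.

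\emph{The third-moment and conditional-variance terms do not vanish.} Already for $j=1$, in your coordinates $\xi_1=-\sum_{\ell\ge0}\gamma_{\ell+1}\bar\gamma_\ell$ and $\ev|\xi_1|^2=\sum_{\ell\ge0}a_\ell a_{\ell+1}=\frac{2}{\beta}$, with $a_\ell=(1+\frac{\beta}{2}\ell)^{-1}$. The single term $\gamma_1\bar\gamma_0$ contributes the fraction $\frac{\beta}{2+\beta}$ of this variance. With $\lambda\asymp 1/n$, the remainder $\lambda^{-1}\ev|W'-W|^3\asymp\sum_m\ev|\Delta_mW|^3$ is bounded below by the $O(1)$ jumps produced when $\gamma_0,\gamma_1,\gamma_2$ are resampled. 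Your count ``only $O(1)$ of the $n$ indices, hence $O(1/n)$'' drops the factor $\lambda^{-1}\asymp n$. Likewise, $\lambda^{-1}\ev[|W'-W|^2\mid W]$ keeps $O(1)$ random fluctuations coming from the same coefficients.

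\emph{For $j\ge2$ the regression condition also fails.} Write $\ev_m$ for integrating out the $m$-th coefficient. A single-site resampling generator $\sum_m w_m(\ev_m-I)$ multiplies each Hoeffding component by minus the total weight of its support. Now $\tr\cC_n^2$ contains the components $\bar\gamma_0^2\gamma_1^2$, $\bar\gamma_0\gamma_2$, $\bar\gamma_0(|\gamma_1|^2-\ev|\gamma_1|^2)\gamma_2$ and $\bar\gamma_1^2\gamma_2^2$. Their supports $\{0,1\},\{0,2\},\{0,1,2\},\{1,2\}$ cannot all receive the same total weight. Hence $R/\lambda$ is of order one: locality alone does not give linear regression.

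\paragraph{What is missing.} You need an exact first-order Stein identity whose only error lives on the small coefficients. The paper obtains this by integrating by parts in each Beta variable (Proposition~\ref{prop:beta-Stein}), which leaves no third-moment remainder. Summed over $j$, this turns the Ablowitz--Ladik bracket into $\ev[\{F,G\}_n]=i\frac{\beta}{2}\ev[F\,\mathcal{D}_nG]$. The weights $j+1$ in $\mathcal{D}_n$ match $\beta_j=\frac{\beta}{2}(j+1)$, and the traces are exact eigenfunctions of $\mathcal{D}_n$ (Claim~\ref{claim:D}). The Cantero--Simon identities then compute $\{F(\mathbf{T}^{(n)}),\overline{\mathtt{T}}_j^{(n)}\}_n$ exactly, up to products $\bar b^{(n)}_{m-l}b^{(n)}_{j-l}$. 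These vanish because $b^{(n)}_i$ depends only on $\alpha_{n-i-1},\dots,\alpha_{n-1}$, which are small. The variance $\frac{2j}{\beta}$ and independence across $j$ then come out of this identity automatically; your general-$j$ variance telescoping is only asserted.

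\paragraph{Steps~1--2.} These are fine in substance. By the reversal duality of finite CMV matrices, $\cC(\gamma_{n-1},\dots,\gamma_0)$ is isospectral to the paper's $\cC(\alpha_0,\dots,\alpha_{n-2},\eta)$ with $\alpha_k=-\gamma_0\bar\gamma_{k+1}$ and $\eta=\gamma_0$. Two details are wrong, though. First, the bulk terms of the traces are quadratic or higher, not linear. Second, the uniform rotation $\gamma_\ell\mapsto e^{i\phi}\gamma_\ell$ leaves $\xi_1$ invariant, so it yields neither circular symmetry nor orthogonality. Use the eigenvalue rotation $\alpha_k\mapsto e^{-i(k+1)\theta}\alpha_k$ instead.
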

The proof of Theorem \ref{thm:clt1} relies on a careful analysis of the moments of traces of circular beta-ensemble using Jack polynomials. Note that Jiang and Matsumoto also derived CLTs for general testing functions when $\beta=1,4$. See \cite{DiaconisEvans} for CLTs for the eigenvalues arising from classical orthogonal, unitary and symplectic groups. 

As a consequence of these CLTs, one can show that the eigenvalues of the circular beta-ensemble are log-correlated. More precisely, define the (normalized) characteristic polynomial of circular beta-ensemble as 
\begin{align*}
    X_n(z):=\prod_{j=1}^n (1-ze^{-i\theta_j}),\qquad \{e^{i\theta_j},1\le j\le n\}\sim\text{C$\beta$E}_n.
\end{align*}
The real (and imaginary) part of $\log X_n$ on the unit circle converges weakly to a random Gaussian field with correlation kernel proportional to $\log |e^{i\omega_1}-e^{i\omega_2}|^{-1}$. The parameter $\beta$ describes the strength of the correlation kernel. We refer to \cite{Jo88,HKO01,Lambert_CLT} for more details. In \cite{Virag14}, Vir\'ag raised the question on the connection between Gaussian multiplicative cascades and random matrix theory.
Recently in \cite{CN_GMC}, Chhaibi and Najnudel established an equality between the projective limit of the circular beta-ensemble and Kahane’s Gaussian Multiplicative Chaos (GMC) on the unit circle (after a proper normalization) in the subcritical and critical regimes. Using the integrability of the circular beta-ensemble, Lambert and Najnudel \cite{LambertNajnudel} derived the multiplicative chaos convergence for the characteristic polynomial in the subcritical regime. See e.g.~\cite{RhodesVargas} for a survey on the GMC theory. 
See also \cite{KS,BVBV_op} for the local scaling limit of the circular beta-ensemble, and \cite{CNN,BVBV_szeta} for the microscopic limit of the normalized characteristic polynomial. 
 
The starting point of the proof of \cite{CN_GMC} is a coupling of circular beta-ensembles of all dimensions through the Verblunsky coefficients:
let $\alpha_j,j\ge 0$ be independent rotational invariant random variables such that $|\alpha_j|^2\sim \textup{Beta}(1,\tfrac{\beta}{2}(j+1))$, and let $\eta$ be uniformly chosen on the unit circle, independent of $\alpha_j,j\ge 0$. Then Killip and Nenciu \cite{KillipNenciu} proved that
\[
\spec\{\cC(\alpha_0,\alpha_1,\dots,\alpha_{n-2},\eta)\}\ed \spec\{\Circ_{n,\beta}\} \sim \text{C$\beta$E}_n.
\]
It is then possible to couple C$\beta$E$_n$ and C$\beta$E$_{n+1}$ in the same probability space so that the first $(n-1)$ Verblunsky coefficients match exactly. Another key ingredient is the identification of the limit of $\log X_n$ as the (log-correlated) Gaussian analytic function. By comparing the corresponding Taylor coefficients, Chhaibi and Najnudel obtained the following non-trivial identity in law:
\begin{equation}\label{eq:1st}
            \sum_{j\ge -1}\alpha_j\bar \alpha_{j+1} \sim \sqrt{\tfrac{2}{\beta}} \CC N(0,1),
\end{equation}
where we define $\alpha_{-1}=-1$ by convention. This convention is consistent with that in \cite{OPUC1} and will be used throughout the paper. 
% \begin{theorem}[\cite{JM15}]\label{eq:1st}
%     Fix $\beta>0$, let $\alpha_j,j\ge 0$ be independent rotational invariant random variables such that $|\alpha_j|^2\sim \textup{Beta}(1,\tfrac{\beta}{2}(j+1))$, then
%     \begin{align}
%     \end{align}
%     where we define $\alpha_{-1}=-1$ by convention, and denote by $Z\sim \CC N(0,1)$ if the real and complex parts of $Z$ are i.i.d.~$N(0,\tfrac12)$. 
% \end{theorem}

In terms of CMV matrices, we have $\sum_i \lambda_i^k \ed \tr (\Circ_{n,\beta}^k)\ed \tr (\cC(\alpha_0,\alpha_1,\cdots,\alpha_{n-2},\eta)^k)$. This motivates us to find a direct proof of the convergence of the traces of powers of the circular beta-ensembles under the coupling described above. This coupling also allows us to derive the a.s.~and $L^2$ convergence of \eqref{eq:clt}. 
\begin{theorem}\label{thm:main}
    Fix $\beta>0$. There exists a coupling of a sequence of random unitary matrices $\cC_n,n\ge 1$ such that $ \spec\{\cC_n\}\ed \spec\{\Circ_{n,\beta}\} \sim \text{C$\beta$E}_n$, 
    and for any fixed $k\ge 1$ we have 
    \begin{equation}\label{eq:conv_vec}  \left(\tr(\cC_n),\tr(\cC_n^2),\cdots,\tr(\cC_n^k)\right)\to \left(T_1,T_2,\dots,T_k\right) \quad \text{a.s. and in $L^2$ as $n\to\infty$,}
    \end{equation}
    where $T_j, j\ge 1$ is a sequence of independent complex normal random variables with $T_j\sim\CC N(0,\frac{2}{\beta}j)$. 
\end{theorem}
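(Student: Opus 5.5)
The coupling is the one already described. Take independent rotation-invariant $\alpha_j$, $j\ge0$, with $|\alpha_j|^2\sim\textup{Beta}(1,\tfrac{\beta}{2}(j+1))$, and independent uniform $\eta_n$ on $\partial\D$. Set $\cC_n=\cC(\alpha_0,\dots,\alpha_{n-2},\eta_n)$; one may even take a single $\eta$ for all $n$. By Killip--Nenciu, $\spec\{\cC_n\}\sim$ C$\beta$E$_n$.

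The first step is to write $\tr(\cC_n^k)$ as an explicit polynomial in the Verblunsky coefficients. Since $\cC$ is five-diagonal, $\tr(\cC^k)=\sum_m (\cC^k)_{mm}$ and each diagonal entry depends only on $\alpha_j,\bar\alpha_j,\rho_j=\sqrt{1-|\alpha_j|^2}$ with $|j-m|\le k$. Carrying out the path expansion for the $\mathcal L\mathcal M$ factorization of $\cC$ should give
\[
\tr(\cC_n^k)=\sum_{j=-1}^{n-2} F_k(\alpha_{j},\alpha_{j+1},\dots,\alpha_{j+k}) + \text{(boundary terms near } j\approx n),
\]
with $F_k$ a fixed polynomial in the $\alpha$'s, $\bar\alpha$'s and $\rho^2$'s, using the convention $\alpha_{-1}=-1$. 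For $k=1$ this gives $\tr\cC_n=-\sum_{j=-1}^{n-2}\alpha_j\bar\alpha_{j+1}$, up to a boundary term involving $\eta_n$, which recovers \eqref{eq:1st}. The leading part of $F_k$ is $\pm\prod_{i}\rho_{j+i}^2\,\alpha_j\bar\alpha_{j+k}$, plus terms containing at least three $\alpha$ factors.

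The second step is to show that the boundary terms (at most $O(k)$ summands involving $\eta_n$ and $\alpha_{n-k},\dots,\alpha_{n-2}$) tend to $0$ a.s. and in $L^2$. This follows from $\ev|\alpha_j|^2\asymp 1/j$ and the fact that $|\alpha_j|\to0$ a.s. For a.s. convergence of each one, Borel--Cantelli with fourth moments ($\ev|\alpha_j|^4=O(j^{-2})$) is not summable. Instead I would use higher moments: $\ev|\alpha_j|^{2p}=O(j^{-p})$ with $p=3$.

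The third step is convergence of the bulk series $S_n=\sum_{j<n-k}F_k(\alpha_j,\dots,\alpha_{j+k})$. Rotation invariance makes the summands orthogonal enough that $S_n$ behaves like a martingale with respect to $\mathcal F_j=\sigma(\alpha_0,\dots,\alpha_j)$. Each term $\alpha_j\bar\alpha_{j+k}\cdot(\dots)$ has conditional mean zero given $\mathcal F_{j+k-1}$, by invariance of $\alpha_{j+k}$ under rotation. The variance is summable because $\ev|\alpha_j|^2|\alpha_{j+k}|^2=O(j^{-2})$. Terms with three or more $\alpha$'s that are not centred (for example $|\alpha_j|^2|\alpha_{j+1}|^2$-type products) have absolutely summable means. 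The martingale convergence theorem plus $L^2$ boundedness then gives a.s. and $L^2$ convergence of $S_n$ to some $T_k$. Jointly in $k$ this gives \eqref{eq:conv_vec}.

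The main obstacle is identifying the law of the limit vector $(T_1,\dots,T_k)$ as independent $\CC N(0,\tfrac{2}{\beta}j)$. I would first try Stein's method applied to truncated sums. For the martingale part, use the martingale CLT (or Stein's method for exchangeable pairs, rotating one $\alpha_m$ to produce the pair). Sum conditional variances $\sum_j \ev[|\alpha_j|^2|\alpha_{j+k}|^2\cdots\mid\mathcal F_{j+k-1}]$: with $\ev|\alpha_{j+k}|^2\approx 2/(\beta j)$, the tail sums of $|\alpha_j|^2\cdot 2/(\beta j)$ should concentrate, and the multiplicity of paths contributing to $F_k$ yields total variance $\tfrac{2}{\beta}k$. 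Rotation invariance gives vanishing pseudo-variance and asymptotic independence across different $k$, since cross terms between $F_k$ and $F_l$ carry a net phase. The careful combinatorial bookkeeping of $F_k$ is where I expect most of the work. Alternatively, once a.s. convergence is established, the law of the limit follows directly from Theorem \ref{thm:clt1}, since a.s. convergence implies convergence in distribution; this gives a valid proof, though not a self-contained one.
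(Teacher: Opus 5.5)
You use the paper's coupling, and your convergence argument is a valid alternative to the paper's. Your fallback completes the proof. The self-contained identification of the law that you propose first, however, would not work.

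\textbf{Convergence.} The paper never expands $\tr(\cC^k)$ into local polynomials. It works with the Taylor coefficients $\mathtt t_k^{(n)}$ of $\log\Phi_n^*$, whose increments $[z^k]\log(1-\alpha_n zB_n(z))$ form an honest $\mathcal F_n$-martingale with $\ev|m_k^{(n)}|^2=O(k^2/n^2)$. It controls the $\eta$-boundary through the Blaschke coefficients, $|b_k^{(n)}|\le\sum_{j=n-k-1}^{n-1}|\alpha_j|$, so no combinatorics of $F_k$ is needed. Your path expansion can also be made rigorous:
\begin{itemize}
\item In a closed path contributing to $(\cC^k)_{mm}$, the top turning point $M$ contributes a positive power of $\bar\alpha_{M-1}$, and all other factors have smaller index. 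Grouping monomials by $M$ therefore gives martingale differences in the natural filtration.
\item Every monomial picks up exactly $e^{ik\theta}$ under $\alpha_j\mapsto e^{-i(j+1)\theta}\alpha_j$, so the non-centred terms you worry about do not occur.
\item Small slip: $\ev|\alpha_j|^4=O(j^{-2})$ is summable, so fourth moments already suffice for Borel--Cantelli.
\end{itemize}

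\textbf{The law.} Your primary plan fails. $T_k$ is the full sum of a fixed series, not a normalized sum of small increments, so a martingale CLT cannot produce its exact distribution. Take $k=1$, where $T_1=\bar\alpha_0-\sum_{j\ge0}\alpha_j\bar\alpha_{j+1}$:
\begin{itemize}
\item The first increment $\bar\alpha_0$ is bounded and non-Gaussian.
\item The sum of conditional variances, $\ev|\alpha_0|^2+\sum_{j\ge0}|\alpha_j|^2\,\ev|\alpha_{j+1}|^2$, has mean exactly $\tfrac2\beta$. It is nonetheless a nondegenerate random variable driven by the first few $\alpha_j$.
\item The tails you want to concentrate carry only $O(1/N)$ of the variance.
\end{itemize}
An exchangeable pair that only changes the phase of some $\alpha_m$ fails for a related reason. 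Such a pair is exchangeable conditionally on all moduli. Given the moduli, however, $T_1$ has second moment $\sum_{j\ge-1}|\alpha_j|^2|\alpha_{j+1}|^2$, which is not constant, so the Stein error terms cannot vanish.

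Gaussianity here is an exact identity, and the paper derives it from exact identities rather than from bookkeeping:
\begin{itemize}
\item Integrating by parts in the full complex variable $\alpha_j$ (Proposition~\ref{prop:beta-Stein}) and summing over $j$ gives $\ev[\{F,G\}_n]=i\tfrac\beta2\ev[F\cD_nG]$.
\item Here $\cD_n$ generates the global rotation $\alpha_j\mapsto e^{-i(j+1)\theta}\alpha_j$, and $\ol{\mathtt T}_k^{(n)}$ is an eigenfunction of it (Claim~\ref{claim:D}).
\item The Cantero--Simon identities (Propositions~\ref{prop:Poisson_id} and~\ref{prop:logAL}) turn $\{F(\mathbf T^{(n)}),\ol{\mathtt T}_j^{(n)}\}_n$ into $ij^2\partial_{z_j}F$ plus error terms $\bar b^{(n)}b^{(n)}\to0$.
\item Passing to the limit yields exactly the identities of Proposition~\ref{prop:Gauss-Stein}.
\end{itemize}

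\textbf{Your fallback.} The argument is that a.s.\ convergence implies convergence in law, and Theorem~\ref{thm:clt1} then identifies the limit. This is logically sound and proves the statement as written. What it gives up is the point of the paper. It imports the Gaussian law from Jiang--Matsumoto's Jack-polynomial computation. The paper's argument is independent of that computation and in fact re-derives Theorem~\ref{thm:clt1}; yours cannot.
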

The proof of Theorem \ref{thm:main} relies on applying an analogue of Stein's method to the sequence of Verblunsky coefficients, and an analysis of the Ablowitz-Ladik system for the circular beta-ensemble. Along the way, we also find a direct proof of \eqref{eq:1st}. 
\begin{theorem}\label{thm:1st}
    Under the coupling in Theorem \ref{thm:main}, the random variable $T_j$ can be represented as the trace of the $j$-th power of an infinite CMV matrix $\cC_\infty$. In particular, 
    \begin{align*}
        T_1 = \sum_{j\ge -1}\alpha_j\bar\alpha_{j+1} \sim \CC N(0,\tfrac{2}{\beta}),
    \end{align*}
    where $\alpha_j,j\ge 0$ is a sequence of independent, rotational invariant random variables such that $|\alpha_j|^2\sim \textup{Beta}(1,\tfrac{\beta}{2}(j+1))$.
\end{theorem}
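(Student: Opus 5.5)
Under the coupling in Theorem \ref{thm:main}, we take $\cC_n=\cC(\alpha_0,\dots,\alpha_{n-2},\eta_n)$, where the $\alpha_j$ are fixed once and for all and $\eta_n$ is an independent uniform point on $\partial\D$. The plan is to write $\tr(\cC_n^j)$ as a finite polynomial in the Verblunsky coefficients, identify its limit $T_j$ with the corresponding entry sum for the infinite CMV matrix $\cC_\infty=\cC(\alpha_0,\alpha_1,\dots)$, and then read off the law of $T_1$ from Theorem \ref{thm:main}.

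\textbf{Step 1: band structure.} The CMV matrix is a product $\cL\cM$ of block-diagonal matrices with $2\times 2$ blocks
\[
\Theta_j=\mat{\bar\alpha_j}{\rho_j}{\rho_j}{-\alpha_j},\qquad \rho_j=\sqrt{1-|\alpha_j|^2}.
\]
It is therefore five-diagonal, and the entry $(\cC^k)_{mm}$ depends only on $\alpha_l$ and $\rho_l$ with $|l-m|\le 2k$ or so. Consequently
\[
\tr(\cC_n^k)=\sum_{m=1}^n(\cC_n^k)_{mm},
\]
and every diagonal entry with index $m\le n-Ck$ coincides with the corresponding entry $(\cC_\infty^k)_{mm}$.

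\textbf{Step 2: the case $k=1$.} A direct computation gives the diagonal entries $(\cC)_{mm}=-\alpha_{m-1}\bar\alpha_{m-2}$, with the convention $\alpha_{-1}=-1$. Hence
\[
\tr\cC_n=\sum_{j=-1}^{n-2}\alpha_j\bar\alpha_{j+1},\qquad \alpha_{n-1}:=\eta_n,
\]
up to the paper's sign and conjugation conventions. The boundary term $\alpha_{n-2}\bar\eta_n$ has modulus at most $|\alpha_{n-2}|$. Since $\ev|\alpha_j|^2=\frac{1}{1+\beta(j+1)/2}\asymp 1/j$, we have $\alpha_n\to0$ a.s. This follows from Borel--Cantelli with fourth moments, since $\ev|\alpha_j|^4\asymp j^{-2}$.

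\textbf{Step 3: convergence of the series.} Next we show that $S=\sum_{j\ge -1}\alpha_j\bar\alpha_{j+1}$ converges a.s. and in $L^2$. By rotation invariance and independence, the terms $\alpha_j\bar\alpha_{j+1}$ are orthogonal in $L^2$ with mean zero. For $j\ne l$ the product $\ev[\alpha_j\bar\alpha_{j+1}\bar\alpha_l\alpha_{l+1}]$ vanishes because some $\alpha$ appears unpaired, which rotation invariance kills. The variances satisfy $\ev|\alpha_j|^2\ev|\alpha_{j+1}|^2\asymp j^{-2}$, which is summable. Moreover the partial sums form a martingale with respect to $\mathcal F_j=\sigma(\alpha_0,\dots,\alpha_{j+1})$, since $\ev[\alpha_{j+1}\bar\alpha_{j+2}\mid\mathcal F_j]=0$. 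The $L^2$-bounded martingale convergence theorem then gives a.s. and $L^2$ convergence. Combined with Step 2, this shows $\tr\cC_n\to S$ a.s. and in $L^2$.

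\textbf{Step 4: general $j$ and identification of the law.} For general $j$ the same argument applies. We show that the series $\sum_m(\cC_\infty^j)_{mm}$ converges, and that its difference from $\tr\cC_n^j$ is a boundary term built from finitely many $\alpha_l$ with $l\approx n$, which tends to $0$. Hence $T_j=\sum_m(\cC_\infty^j)_{mm}$, which is the desired trace representation. Convergence of this series is the main obstacle. Each diagonal entry $(\cC_\infty^j)_{mm}$ is a sum of monomials in the $\alpha_l,\bar\alpha_l,\rho_l$; the net-zero-phase monomials (e.g. $|\alpha_l|^2$) do not have mean zero, so the orthogonality argument must be applied after centering. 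I would first group the terms by the polynomial structure coming from the Ablowitz--Ladik flow, then control the means using $\ev|\alpha_l|^2\asymp 1/l$, and check that each block's mean is summable or telescopes. Alternatively, and more simply for the stated case, the limit $T_j$ is a priori given by Theorem \ref{thm:main}, so it suffices to show that the truncated traces agree with the infinite sums up to terms that vanish in $L^2$. Finally, since $\tr\cC_n\to T_1$ by Theorem \ref{thm:main} and $\tr\cC_n\to S$ by Steps 2--3, uniqueness of limits gives $T_1=S$ a.s. Therefore $S\sim\CC N(0,\tfrac2\beta)$.
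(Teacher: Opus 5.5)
Your argument is correct, but it takes a different route from the paper's.

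The paper proves Theorem~\ref{thm:1st} independently of Theorem~\ref{thm:main}:
\begin{itemize}
\item The representation $T_k=\tr(\cC_\infty^k)$ comes from Propositions~\ref{prop:ab} and~\ref{prop:mg}. There, $\mathtt{T}_k^{(n)}$ converges as an $L^2$-bounded martingale built from the Taylor coefficients of $\log\Phi_n^*$. The difference $T_k^{(n)}-\mathtt{T}_k^{(n)}$ is controlled through the Blaschke coefficients $b_k^{(n)}$. The partial diagonal sums of $\cC_\infty^k$ are then compared with $\mathtt{T}_k^{(n)}$ by a path count.
\item The law of $T_1$ comes from the direct Stein argument of Section~\ref{sec:1st}, which uses only Proposition~\ref{prop:beta-Stein} and a telescoping sum.
\end{itemize}

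You instead import both the existence of the limits $T_j$ and their Gaussian law from Theorem~\ref{thm:main}. What you add is an explicit martingale proof that $\sum_{j\ge-1}\alpha_j\bar\alpha_{j+1}$ converges, which matches the paper's remark in Section~\ref{sec:1st}. You also add a boundary comparison between $\tr(\cC_n^j)$ and $\sum_{m\le n}(\cC_\infty^j)_{mm}$. This is not circular, since the paper's proof of Theorem~\ref{thm:main} does not use Theorem~\ref{thm:1st}, and it is shorter. The price is that \eqref{eq:1st} becomes a corollary of the whole Ablowitz--Ladik/Poisson-bracket machinery. The paper's route, by contrast, gives a self-contained elementary proof of it.

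A few details need attention.

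\textbf{The first route in Step~4 is unnecessary.} You propose proving convergence of $\sum_m(\cC_\infty^j)_{mm}$ directly by centering and grouping monomials. Once $\tr(\cC_n^j)$ converges and the boundary difference vanishes, the series converges automatically. This is exactly how the paper proceeds, with $\mathtt{T}_k^{(n)}$ as the convergent input.

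\textbf{The boundary estimate needs a stronger justification.} Saying the boundary term is ``built from $\alpha_l$ with $l\approx n$'' is not enough, because the entries $\rho_l\rho_{l+1}$ and $\rho_{n-2}\bar\eta$ are not small. What you need is that every closed walk of length $j<n$ uses at least one entry carrying a factor $\alpha_l$. This holds because:
\begin{itemize}
\item in $\cC_\infty$, the $\rho$-only entries form a one-sided directed shift;
\item in $\cC_n$, these entries together with $\rho_{n-2}\bar\eta$ form a single directed $n$-cycle.
\end{itemize}
This gives
$|\tr(\cC_n^j)-\sum_{m\le n}(\cC_\infty^j)_{mm}|\le C_j\max_{|l-n|\le Cj}|\alpha_l|$.

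\textbf{Sign and conjugation.} In fact $(\cC)_{mm}=-\alpha_{m-2}\bar\alpha_{m-1}$, so the trace is $-\sum_{j\ge-1}\alpha_j\bar\alpha_{j+1}$. The sign is immaterial for the law by rotation invariance, and the paper is itself inconsistent here.

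\textbf{Coupling.} Your Steps~2--3 work for the paper's single $\eta$ as well as for your $\eta_n$.
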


The proofs of Theorems \ref{thm:main} and \ref{thm:1st} consist of two main steps. First, we establish the convergence of the traces of powers of circular beta-ensembles to the corresponding traces of powers of the infinite CMV matrix described in Theorem \ref{thm:1st}. Then we identify the distribution of the limiting objects. Although parts of the first step are known in the literature (see e.g.~\cite{CN_GMC}), we include the proofs together with some elementary estimates for completeness. In forthcoming joint work with Liu and Najnudel \cite{LLN}, we provide a more refined analysis, see Remark \ref{rmk:LLN} below for more details.

An advantage of our method is that it establishes both almost surely and $L^2$ convergence under the natural coupling of circular beta-ensembles of all dimensions. Our main contribution is the use of Stein's method together with the Ablowitz-Ladik system to characterize the distributions of the limits.
Heuristically speaking, the traces of powers of the limiting CMV matrix can be viewed as eigenvectors of certain differential operators associated with the Ablowitz-Ladik system, see Remark \ref{rmk:infinite} below for further discussion. The Gaussian limits then follow directly from Stein's method, and certain identities of Poisson bracket related to the Ablowitz-Ladik system.
We treat the $k=1$ case separately to illustrate the main idea and to provide a direct proof of \ref{eq:1st}, as this case may also be of independent interest.

We conclude the introduction with a brief review of the related work. For circular beta-ensembles, Webb \cite{Webb_CLT} generalized the result of \cite{JM15} using Stein’s method and obtained quantitative convergence rantes. Stein's method has also been used to establish quantitative CLTs for a growing number of traces for classical compact groups \cite{DoblerStolz}. See \cite{DE_CLT,DP_CLT} for CLTs for other beta-ensembles, as well as the quantitative results \cite{lambert2019quantitative}. Another approach to establishing CLTs for random matrices is based on loop equations. This idea goes back to Johansson's work \cite{Jo_loop} and has been developed further in \cite{BorotGuionnet,Shc13}. See \cite{Lambert_CLT} for mesoscopic CLT for circular beta-ensemble. 
Johansson \cite{Johansson97Annals} established a super-exponential rate of convergence to the Gaussians for  Haar unitary matrices, and later Johansson and Lambert \cite{JohanssonLambert} obtained
multivariate bounds in total variation that allow for a growing number of traces.

We expect that our framework can be adapted to study CLTs for other beta-ensembles on the circle and the real line, including the circular Jacobi beta-ensemble and the real orthogonal beta-ensemble. To keep the present paper concise, we leave these extensions for future work.

\medskip

\noindent \textbf{Outline of the paper:}
In Section \ref{sec:background}, we provide the necessary background on OPUC theory, CMV matrices, Stein's method and Ablowitz-Ladik system. Section \ref{sec:1st} contains a direct proof of the distributional identity \eqref{eq:1st}. In Section \ref{sec:proof} we complete the proofs of Theorems \ref{thm:main} and \ref{thm:1st}. Section \ref{sec:Appendix} (the Appendix) collects some of the computations for completeness. 
\medskip

\noindent \textbf{Acknowledgment:}
The author thanks B\'alint Vir\'ag for pointing out the connection to Schur flow and the Ablowitz-Ladik system. The author also thanks Reda Chhaibi and Benedek Valk\'o for valuable discussions. The author is grateful for the hospitality of the Chinese Academy of Sciences and Institut Mittag-Leffler, where the proof ideas were conceived and completed during the \textit{RMT-GMC seminars} and the workshop on \textit{Characteristic Polynomials}.

\section{Preliminaries}\label{sec:background}

\subsection{OPUC and CMV matrices}\label{subsec:OPUC}
We provide a brief review of OPUC theory, together with some properties of CMV matrices. We refer to \cite{OPUC1} for more details. 

Let $\nu$ be a probability measure on $\partial \D$, and let $\Phi_k,k\ge 0$ be the Gram-Schmidt orthogonalization of the polynomials $1,z,z^2,\cdots$. 
Define the reversed polynomials  
\begin{align*}
\Phi_k^*(z):=z^k \overline{\Phi_k(1/\bar z)}.
\end{align*}
Then the famous Szeg\H o recursion (see e.g.~Section 1.5 of \cite{OPUC1}) states that 
\begin{align}\label{eq:Szego1}
\binom{\Phi_{k+1}}{\Phi_{k+1}^*}=
% A_k Z \binom{\Phi_{k}}{\Phi_{k}^*}, \qquad \binom{\Phi_0}{\Phi_0^*}
\mat{1}{-\bar \alpha_k}{-\alpha_k}{1} \mat{z}{0}{0}{1} \binom{\Phi_{k}}{\Phi_{k}^*}, \qquad \binom{\Phi_0}{\Phi_0^*}
=\binom{1}{1},\qquad 0\le k\le n-1.
\end{align}
The constants $\alpha_k$, $0\le k\le n-1$ are called the Verblunsky coefficients. In the case when $\mu$ is supported on $n$ points on $\partial \D$, we have $|\alpha_j|<1$ for $0\le j\le n-2$ and $|\alpha_{n-1}|=1$. Otherwise, we have $|\alpha_j|<1$ for all $j\ge 0$. The Szeg\H o recursion can be viewed as the analogue of the three-term recursion for orthogonal polynomials on the line $\R$. Let us also note that  if $\mu$ determines the sequence of Verblunsky coefficients, then the converse is also true, see e.g.~Theorem 1.7.11 of \cite{OPUC1}.
% The map between the probability measures on $\partial \D$ and the possible Verblunsky coefficients is invertible, and both the map and its inverse are continuous. (See e.g.~Theorem 1.7.11 of \cite{OPUC1}.)

The next definition introduces the finite and infinite CMV matrices.

 \begin{definition}\label{def:CMV}
 Let $\{\alpha_0,\alpha_1,\cdots\}$ be a sequence of complex coefficients with  $|\alpha_k|\le 1$. Define
\[
\Xi_k = \begin{pmatrix}
\bar\alpha_k & \rho_k\\ \rho_k& -\alpha_k
\end{pmatrix},\qquad \rho_k=\sqrt{1-|\alpha_k|^2},
\]
and set $\Xi_{-1}=(1)$. Then we define the {CMV matrix} corresponding to $\{\alpha_0,\alpha_1,\cdots\}$ as
\begin{align}\label{eq:CMV}
	\cC(\alpha_0,\alpha_1,\cdots) :=\cL\cM,
\end{align}
where $\cL, \cM$ are block-diagonal matrices 
\begin{align*}
\cL=\operatorname{diag}\left(\Xi_0,\Xi_2\cdots\right),\qquad \cM=\operatorname{diag}\left(\Xi_{-1},\Xi_1\cdots\right).
\end{align*}
In the case when $|\{\alpha_0,\alpha_1,\cdots\}|=n$, we define $\Xi_{n-1}=(\bar\alpha_{n-1})$ to be $1\times 1$ matrix and set $\cL,\cM$ to be $n\times n$ matrices
\begin{align*}
\cL=\operatorname{diag}\left(\Xi_0,\Xi_2\cdots,\Xi_{2\lfloor\tfrac{n-1}{2}\rfloor}\right),\qquad \cM=\operatorname{diag}\left(\Xi_{-1},\Xi_1\cdots,\Xi_{2\lfloor\tfrac{n}{2}\rfloor-1}\right).
\end{align*}
 \end{definition}

Note that $\cC(\alpha_0,\dots, \alpha_{n-1})$ is unitary if and only if $|\alpha_{n-1}|=1$. In the case when there are infinitely many Verblunsky coefficients, the resulting matrix will be infinite of the form 
\begin{align*}
    \cC_\infty=\begin{pmatrix}
        \bar \alpha_0 & \rho_0\bar\alpha_1 &\rho_0 \rho_1&0 & 0 &\cdots\\
        \rho_0 & -\alpha_0\bar\alpha_1 &-\alpha_0\rho_1 &0&0&\cdots\\
        0&\rho_1\bar\alpha_2 & -\alpha_1\bar\alpha_2 &\rho_2\bar\alpha_3 &\rho_2\rho_3&\cdots\\
        0&\rho_1\rho_2 &-\alpha_1\rho_2 & -\alpha_2\bar\alpha_3 &-\alpha_2\rho_3 & \cdots\\
        0&0&0&\rho_3\bar\alpha_4 &-\alpha_3\bar\alpha_4 &\cdots\\
        \vdots &\vdots & \vdots &\vdots &\vdots &\ddots
    \end{pmatrix}.
\end{align*}
In particular, the trace of $\cC_\infty$ is given by $T_1=-\sum_{j=-1}^{\infty} \alpha_j\bar\alpha_{j+1}$ if the limit exists. 
The following proposition provides a crucial connection between CMV matrices and orthogonal polynomials of a discrete probability measure on $\partial \D$.

\begin{proposition}[\cite{KK,OPUC1}]\label{prop:CMV1}
Let $\cC$ be an infinite or finite CMV matrix corresponding to the sequence of Verblunsky coefficients $(\alpha_0,\alpha_1,\cdots)$, and let $\mu$ be the corresponding probability measure on $\partial\D$. 
Let $\Phi_k$ be the monic orthogonal polynomial associated to $\mu$ (or equivalent, to $\cC$), then 
    \begin{equation}\label{eq:CMV_characteristic}
    \Phi_k(z)=\det(zI_k-\mathcal{C}(\alpha_0,\dots,\alpha_{k-1})),
\end{equation}
for any $k\ge 0$ if $\cC$ is infinite, and for $0\le k\le n$ if $\cC$ has dimension $n$. 
\end{proposition}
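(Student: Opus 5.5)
We prove Proposition \ref{prop:CMV1} by showing that $\det(zI_k-\cC(\alpha_0,\dots,\alpha_{k-1}))$ obeys the Szeg\H{o} recursion \eqref{eq:Szego1}, with the correct initial condition. Since the Verblunsky coefficients determine the monic orthogonal polynomials uniquely through \eqref{eq:Szego1}, this identifies the determinant with $\Phi_k$.

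The first step is to reduce to the finite case. For $k\ge 1$, the matrix $\cC(\alpha_0,\dots,\alpha_{k-1})$ depends only on $\alpha_0,\dots,\alpha_{k-1}$. Here $\Xi_{k-1}$ is replaced by the $1\times 1$ block $(\bar\alpha_{k-1})$, with $|\alpha_{k-1}|\le 1$ allowed; in general this matrix is not unitary. For the $\Phi_k$ attached to $\mu$, the first $k$ Verblunsky coefficients are exactly $\alpha_0,\dots,\alpha_{k-1}$. So it suffices to prove, for an arbitrary finite sequence with $|\alpha_j|<1$ for $j<k-1$, that $P_k(z):=\det(zI_k-\cC(\alpha_0,\dots,\alpha_{k-1}))$ equals the $k$-th polynomial $\Phi_k$ generated by \eqref{eq:Szego1}. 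The base cases are immediate: $P_0=1$, and $P_1=z-\bar\alpha_0=\Phi_1$ because $\Phi_1=z\Phi_0-\bar\alpha_0\Phi_0^*$.

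The second step is to control both $\Phi_k$ and $\Phi_k^*$. I would introduce the companion determinant $Q_k(z)$, obtained by replacing the last Verblunsky coefficient by a free parameter. Write $\cC_k(t):=\cC(\alpha_0,\dots,\alpha_{k-2},t)$. The key structural fact is that $\det(zI_k-\cC_k(t))$ is affine in $\bar t$, since $\bar t$ appears in a single entry. Hence
\[
\det(zI_k-\cC_k(t)) = A_k(z) - \bar t\, B_k(z)
\]
for polynomials $A_k,B_k$ that do not depend on $t$. The claim to prove is $A_k=z\Phi_{k-1}$ and $B_k=\Phi_{k-1}^*$, which is exactly Szeg\H{o}'s formula $\Phi_k=z\Phi_{k-1}-\bar\alpha_{k-1}\Phi_{k-1}^*$. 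To identify $A_k$ and $B_k$, I would expand along the last row and column. Because of the $\cL\cM$ block structure, only the bottom-right corner changes when we pass from size $k-1$ to size $k$. This yields a two-term recursion for the pair $(A_k,B_k)$ in terms of $(A_{k-1},B_{k-1})$ and $\rho_{k-2}$. The factors $\rho_{k-2}^2=1-|\alpha_{k-2}|^2$ should combine so that the recursion matches \eqref{eq:Szego1}. It is convenient to treat $k$ even and $k$ odd separately, since the last block sits in $\cL$ or in $\cM$ accordingly.

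The third step is the alternative route I would keep in reserve, following Simon \cite{OPUC1}, which is cleaner. Let $\chi_j$ be the CMV basis, obtained by Gram–Schmidt on $1,z,z^{-1},z^2,z^{-2},\dots$ in $L^2(\mu)$. In this basis multiplication by $z$ is represented by $\cC$. Truncating to the first $k$ basis vectors gives $\cC(\alpha_0,\dots,\alpha_{k-1})$, which represents the compression of multiplication by $z$ to $\mathrm{span}\{1,z,\dots,z^{k-1}\}$ after a unimodular rescaling of the basis. The monic polynomial $\Phi_k$ is the unique monic degree-$k$ polynomial orthogonal to that span, and it is the characteristic polynomial of this compression. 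To see this, note that for any eigenvector $f$ with eigenvalue $z_0$ of the compression, $(z-z_0)f$ is orthogonal to the span, so $(z-z_0)f$ is proportional to $\Phi_k$. This gives equality of the zero sets with multiplicities; generic simplicity plus continuity in the $\alpha$'s handles the degenerate cases. When $|\alpha_{k-1}|=1$ the compression is the full operator, consistent with the finite case.

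I expect the main obstacle to be the bookkeeping in the second step, namely verifying that the cofactor expansion reproduces exactly $\Phi_{k-1}^*$ with the correct conjugations. If that becomes messy, I would switch to the operator argument of the third step, where the main care is needed in checking that the truncated $\cL\cM$ is indeed the compression, and not merely a matrix unitarily equivalent to it up to the last row.
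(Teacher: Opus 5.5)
The paper does not prove this proposition; it quotes it from \cite{KK,OPUC1}. The standard argument there is essentially your Step 3: the $k\times k$ corner of the full CMV matrix is the matrix of the compression of multiplication by $z$ onto $\mathrm{span}\{\chi_0,\dots,\chi_{k-1}\}$, and that compression has characteristic polynomial $\Phi_k$.

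Your Step 2 is a genuinely different, more elementary route. It needs no measure or CMV basis, and after reversal it directly produces the affine dependence on the last coefficient that the paper uses in \eqref{eq:characteristic}. It does close, but you stopped at ``should combine''. Here is the missing computation for $k\ge 3$ odd; the even case is the transpose, and $k\le 2$ is checked by hand.

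\begin{itemize}
\item The last row of $zI_k-\cC_k(t)$ is $(0,\dots,0,-\bar t\rho_{k-2},\,z+\bar t\alpha_{k-2})$.
\item The $(k-1)\times(k-1)$ corner of $\cC_k(t)$ is $\cC(\alpha_0,\dots,\alpha_{k-2})$.
\item Expanding along the last row therefore gives $A_k=zP_{k-1}$ and $B_k=\rho_{k-2}\,\mathrm{cof}_{k,k-1}-\alpha_{k-2}P_{k-1}$.
\item Deleting row $k$ and column $k-1$ leaves the first $k-2$ columns of $zI_{k-1}-\cC(\alpha_0,\dots,\alpha_{k-3},s)$, together with the column $\rho_{k-2}(0,\dots,0,-\rho_{k-3},\alpha_{k-3})^\top$. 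That column is $\rho_{k-2}$ times the $\bar s$-coefficient of the last column of that matrix, so $\mathrm{cof}_{k,k-1}=\rho_{k-2}B_{k-1}$.
\end{itemize}

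Using $P_{k-1}=A_{k-1}-\bar\alpha_{k-2}B_{k-1}$, this gives
\[
A_k=z\bigl(A_{k-1}-\bar\alpha_{k-2}B_{k-1}\bigr),\qquad B_k=\rho_{k-2}^2B_{k-1}-\alpha_{k-2}\bigl(A_{k-1}-\bar\alpha_{k-2}B_{k-1}\bigr)=B_{k-1}-\alpha_{k-2}A_{k-1}.
\]
This is exactly the Szeg\H{o} recursion for $(z\Phi_{k-1},\Phi_{k-1}^*)$, with the induction carrying $B_{k-1}=\Phi_{k-2}^*$ as you intended.

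A few smaller corrections follow.

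\begin{itemize}
\item In Step 2, $\bar t$ does not sit in a single entry. It multiplies the whole last row ($k$ odd) or last column ($k$ even) of $\cC_k(t)$, which has up to two nonzero entries. Affinity in $\bar t$ still holds by multilinearity of the determinant.
\item In Step 3, $\mathrm{span}\{\chi_0,\dots,\chi_{k-1}\}=z^{-m}\,\mathrm{span}\{1,\dots,z^{k-1}\}$ with $m=\lfloor (k-1)/2\rfloor$. The two compressions are intertwined by the unitary ``multiply by $z^m$'', which commutes with multiplication by $z$. This is not a rescaling of the basis.
\item Your eigenvector argument only shows that the zero sets agree and that each eigenvalue has geometric multiplicity one. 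It does not show that algebraic multiplicities match.
\item Your density-plus-continuity patch fixes this. Both sides are continuous in the $\alpha_j$, and the discriminant of $\Phi_k$ is a polynomial in $\mathrm{Re}\,\alpha_j,\mathrm{Im}\,\alpha_j$ that is not identically zero.
\item A cleaner fix: on polynomials of degree $<k$, the compression acts as $f\mapsto zf-c_f\Phi_k$, where $c_f$ is the leading coefficient of $zf$. So it is multiplication by $z$ on $\CC[z]/(\Phi_k)$, whose characteristic polynomial is $\Phi_k$ with multiplicities.
\end{itemize}
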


% Suppose  that $U$ is an $n\times n$ unitary matrix for which  $\mathbf{e}_1=(1,0,\dots,0)^{\dag}$ is cyclic. The spectral measure of $U$ with respect to $\mathbf{e}_1$ is  given by the following discrete probability measure:
% \begin{equation}\label{eq:spectral_measure}
% \mu=\sum_{k=1}^n \delta_{\lambda_k} \cdot \left|\langle \mathbf{e}_1,  \mathbf{v}_k\rangle\right|^2.
% \end{equation}
% Here $\lambda_k,1\le k\le n$ are the eigenvalues of $U$, and $\mathbf{v}_k, 1\le k\le n$ are the corresponding unit length (right) eigenvectors. 
% The following proposition summarizes how a unitary matrix and the CMV matrix of its spectral measure are connected. 

% \begin{proposition}[\cite{CMV}]\label{prop:CMV2}
% Suppose that $U$ is an $n\times n$ unitary matrix for which $\mathbf{e}_1$ is cyclic. Let the spectral measure of $U$ with respect to $\mathbf{e}_1$ be $\nu$, and denote the Verblunsky coefficients of $\nu$ by $\alpha_k, 0\le k\le n-1$. Then there is a unitary matrix $V$ with $V\mathbf{e}_1=\mathbf{e}_1$ and $U=V \cC(\alpha_0,\dots, \alpha_{n-1}) V^{-1}$. In particular, $U$ and 
% $\cC$ are unitary equivalent, and they have the same spectral measure with respect to $\mathbf{e}_1$.
% \end{proposition}

Next, we review the Killip-Nenciu construction of the matrix models for circular beta-ensembles.
\begin{proposition}[\cite{KillipNenciu}]\label{prop:KN1}
    Consider the random probability measure  $\mu^{\textup{KN}}_{n,\beta} =\sum_{j=1}^n \pi_j\delta_{e^{i\theta_j}}$ on $\partial \mathbb{D}$,
where $\{e^{i\theta_j},1\le j\le n\}\sim \textup{C}\beta\textup{E}_n$, and the weights $\pi_j, 1\le j\le n$ are
chosen according to \textup{Dirichlet}$(\beta/2,\dots,\beta/2)$ distribution, independently of the support. 
Then Verblunsky coefficients $\tl\alpha_0,\tl\alpha_1,\dots,\tl\alpha_{n-1}$ corresponding to $\mu_n^{\textup{KN}}$ are independent, rotational invariant, such that $|\tl\alpha_j|^2\sim \textup{Beta}(1,\frac{\beta}{2}(n-j-1))$ for $0\le j\le n-2$, and $\tl \alpha_{n-1}\sim \textup{Unif}(\partial\D)$. 
\end{proposition}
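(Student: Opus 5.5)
This is the Killip--Nenciu theorem, so the plan follows their argument: a change of variables from the spectral data $(\theta_1,\dots,\theta_n;\pi_1,\dots,\pi_{n-1})$ to the Verblunsky coefficients $(\tl\alpha_0,\dots,\tl\alpha_{n-2},\tl\alpha_{n-1})\in\D^{n-1}\times\partial\D$. The map is a bijection onto its image, up to relabeling of the support points, by the correspondence between measures and Verblunsky coefficients (Theorem 1.7.11 of \cite{OPUC1}) and by Proposition \ref{prop:CMV1}. The strategy is to write the pushforward of the product density
\[
\frac{1}{Z_{n,\beta}}\prod_{j<k}|e^{i\theta_j}-e^{i\theta_k}|^\beta\cdot\frac{\Gamma(n\beta/2)}{\Gamma(\beta/2)^n}\prod_{j=1}^n\pi_j^{\beta/2-1}
\]
in the new coordinates and check that it factors as
\[
\prod_{j=0}^{n-2}(1-|\alpha_j|^2)^{\frac{\beta}{2}(n-j-1)-1}\,d^2\alpha_j\otimes \frac{d\phi}{2\pi}.
\]
This factorization is exactly the statement: a rotation-invariant variable with density proportional to $(1-|\alpha|^2)^{b-1}$ on $\D$ has $|\alpha|^2\sim\textup{Beta}(1,b)$.

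\textbf{Step 1: two identities expressing spectral quantities through the $\alpha$'s.} Since $\Phi_n(z)=\prod_j(z-e^{i\theta_j})$, evaluating the Szeg\H{o} recursion \eqref{eq:Szego1} at the support points gives the Christoffel weights $\pi_j=1/\sum_{k=0}^{n-1}|\varphi_k(e^{i\theta_j})|^2$, where the $\varphi_k$ are the orthonormal polynomials. From this one derives
\[
\prod_{j<k}|e^{i\theta_j}-e^{i\theta_k}|^2\prod_{j=1}^n\pi_j=\prod_{j=0}^{n-2}\rho_j^{2(n-j-1)},
\]
which is the analogue of the Jacobi-matrix identity and can be proved by induction on $n$ via the recursion.

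\textbf{Step 2: the Jacobian.} Show that
\[
\Big|\frac{\partial(\alpha_0,\dots,\alpha_{n-2},\phi)}{\partial(\theta_1,\dots,\theta_n,\pi_1,\dots,\pi_{n-1})}\Big|^{-1}\propto\frac{\prod_{j=0}^{n-2}\rho_j^{2(n-j-2)}\cdot\prod_{j=0}^{n-2}\rho_j^{2}}{\prod_j\pi_j}\cdots .
\]
The precise Killip--Nenciu form is that the volume element $\prod_j d\theta_j\prod_{j<n}d\pi_j$ equals a constant times
\[
\frac{\prod_{j=0}^{n-2}\rho_j^{2}\ldots}{\prod_j\pi_j}\prod d^2\alpha_j\,d\phi .
\]
The cleanest route avoids a direct determinant computation. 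One compares two expressions for the moments $\int z^k\,d\mu$, or, following Killip--Nenciu, computes the Jacobian by evaluating both sides on a convenient test: take the pushforward of an explicitly computable measure (e.g.\ the $\beta=2$ Haar case, where the joint law of eigenvalues and weights is known from the Weyl integration formula together with Dirichlet$(1,\dots,1)$ weights, and the Verblunsky coefficients of the Haar spectral measure are known to be independent with $|\alpha_j|^2\sim\textup{Beta}(1,n-j-1)$ by Householder reduction). Since the Jacobian does not depend on $\beta$, identifying it in the $\beta=2$ case fixes it for all $\beta$.

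\textbf{Step 3: combine.} Raise the identity from Step 1 to the power $\beta/2$ and multiply by the Jacobian; all $\theta$- and $\pi$-dependence converts into powers of $\rho_j$. The exponents add up to $\frac{\beta}{2}(n-j-1)-1$ for each $j$, and the last coefficient $\tl\alpha_{n-1}=e^{i\phi}$ appears with no weight. Hence $\tl\alpha_{n-1}$ is uniform and independent of the rest.

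The main obstacle is Step 2, whose direct computation is a delicate determinant. I would first try to avoid it through the $\beta=2$ Householder/Haar argument just described. The remaining care is in checking that the $\beta=2$ computation indeed gives the exponent $(n-j-1)-1$, so that the product structure is confirmed.
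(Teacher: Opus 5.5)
The paper gives no proof of this statement and simply cites Killip--Nenciu. Your plan follows their strategy: the identity $|\Delta|^2\prod_j\pi_j=\prod_{k=0}^{n-2}\rho_k^{2(n-k-1)}$, with $|\Delta|^2=\prod_{j<k}|e^{i\theta_j}-e^{i\theta_k}|^2$ and $\rho_k^2=1-|\tl\alpha_k|^2$, combined with a Jacobian. Its architecture is sound. Step~1 is correct; for $n=2$ it reads $\pi_1\pi_2|e^{i\theta_1}-e^{i\theta_2}|^2=1-|\tl\alpha_0|^2$. The change of variables $(\theta,\pi)\mapsto(\tl\alpha_0,\dots,\tl\alpha_{n-2},\phi)$ does not involve $\beta$, so reading its Jacobian off the $\beta=2$ case is legitimate. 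This requires the $\beta=2$ coefficient law to come from an independent argument such as Householder reduction. In that route the $\beta=2$ exponent $n-k-2$ is an input and the Jacobian is the output, not something to be checked afterwards.

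The concrete error is the Jacobian you display in Step~2. Both versions are wrong, and with them Step~3 does not close. The $\beta=2$ comparison uses the Haar density $\propto|\Delta|^2=\prod_k\rho_k^{2(n-k-1)}/\prod_j\pi_j$ on the spectral side and $\propto\prod_k\rho_k^{2(n-k-2)}$ on the coefficient side. It gives
\[
\prod_{j=1}^{n}d\theta_j\prod_{j=1}^{n-1}d\pi_j
=c_n\,\frac{\prod_{j=1}^{n}\pi_j}{\prod_{k=0}^{n-2}\rho_k^{2}}\,\prod_{k=0}^{n-2}d^2\tl\alpha_k\,d\phi .
\]
So $|\partial(\tl\alpha,\phi)/\partial(\theta,\pi)|\propto\prod_k\rho_k^2/\prod_j\pi_j$, with no extra factor $\prod_k\rho_k^{2(n-k-2)}$. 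Equivalently, $d\theta\,d\pi$ carries $\prod_j\pi_j/\prod_k\rho_k^2$, not its reciprocal. With this,
\[
|\Delta|^{\beta}\prod_{j}\pi_j^{\beta/2-1}\cdot\frac{\prod_j\pi_j}{\prod_k\rho_k^{2}}
=\frac{\prod_k\rho_k^{\beta(n-k-1)}}{\prod_j\pi_j^{\beta/2}}\cdot\prod_j\pi_j^{\beta/2}\cdot\frac{1}{\prod_k\rho_k^{2}}
=\prod_{k=0}^{n-2}\rho_k^{\beta(n-k-1)-2},
\]
as required. With your displayed version a factor $\prod_j\pi_j^{-2}$ survives and nothing factorizes.

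You can also bypass the Jacobian entirely. By Step~1, the density of the general-$\beta$ spectral law relative to the $\beta=2$ law is proportional to
\[
|\Delta|^{\beta-2}\prod_j\pi_j^{\beta/2-1}=\Big(|\Delta|^2\prod_j\pi_j\Big)^{\beta/2-1}=\prod_k\rho_k^{(\beta-2)(n-k-1)}.
\]
This is a function of $\tl\alpha_0,\dots,\tl\alpha_{n-2}$ alone. Hence the general-$\beta$ law of $(\tl\alpha_0,\dots,\tl\alpha_{n-2},\phi)$ is the $\beta=2$ law, $\propto\prod_k\rho_k^{2(n-k-2)}\,d^2\tl\alpha_k\,d\phi$, reweighted by this factor. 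That directly yields $\prod_k(1-|\tl\alpha_k|^2)^{\frac{\beta}{2}(n-k-1)-1}$, with $\phi$ uniform and independent, and no differentiability of the change of variables is needed.
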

The matrix model $\Circ_{n,\beta}$ is defined by $\cC(\tilde\alpha_0,\cdots,\tl\alpha_{n-1})$. Killip and Nenciu also proved the following result in \cite{KillipNenciu}. 
\begin{proposition}[\cite{KillipNenciu}]\label{prop:KN2}
    Let $ \alpha_j,j\ge 0$ be independent, rotational invariant random variables such that $| \alpha_j|^2\sim\textup{Beta}(1,\beta_j)$, where $\beta_j:=\frac{\beta}{2}(j+1)$. Let $\eta\sim\textup{Unif}(\partial\mathbb{D})$ be independent of $\alpha_j,j\ge 0$. Then the probability measure $\mu_n$ corresponding to $(\alpha_0,\dots,\alpha_{n-2},\eta)$ has support given by the size $n$ circular beta-ensemble.
\end{proposition}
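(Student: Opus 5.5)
The plan is to deduce the statement from Proposition \ref{prop:KN1} by a \emph{reversal} of the Verblunsky coefficients. By Proposition \ref{prop:KN1}, the spectrum of $\Circ_{n,\beta}=\cC(\tl\alpha_0,\dots,\tl\alpha_{n-1})$ is C$\beta$E$_n$, where $|\tl\alpha_j|^2\sim\textup{Beta}(1,\tfrac\beta2(n-j-1))$. Reindexing by $j\mapsto n-2-j$ gives $|\tl\alpha_{n-2-j}|^2\sim \textup{Beta}(1,\tfrac\beta2(j+1))=\textup{Beta}(1,\beta_j)$. These are exactly the laws of the $\alpha_j$ in the statement. So it suffices to show that the spectrum of a finite unitary CMV matrix is unchanged (up to harmless phases) when the first $n-1$ coefficients are read in reverse order.

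\textbf{Step 1 (deterministic reversal identity).} Let $\gamma_0,\dots,\gamma_{n-2}\in\D$ and $|\gamma_{n-1}|=1$. I would show that
\[
\spec\,\cC(\gamma_0,\dots,\gamma_{n-2},\gamma_{n-1})=\spec\,\cC(\gamma'_0,\dots,\gamma'_{n-2},\gamma'_{n-1}),
\]
where $\gamma'_k=\omega_k\,\gamma_{n-2-k}$ or $\omega_k\,\bar\gamma_{n-2-k}$ for explicit unimodular $\omega_k$ depending only on $\gamma_{n-1}$ (and on parity), and $\gamma'_{n-1}$ is unimodular. The proof uses the factorization $\cC=\cL\cM$ of Definition \ref{def:CMV}. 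First, $\cL\cM$ and $\cM\cL$ are unitarily conjugate, so they have the same spectrum. Next, conjugate $\cM\cL$ by the reversal permutation $J$ ($e_k\mapsto e_{n+1-k}$), possibly also taking a transpose, which preserves the spectrum. Under $J$ the $2\times2$ blocks $\Xi_k$ are flipped to $\begin{pmatrix}-\gamma_k&\rho_k\\ \rho_k&\bar\gamma_k\end{pmatrix}$, and the $1\times1$ end blocks move to the other end.

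The remaining work is to conjugate by a diagonal unitary $D$ and absorb the phase of the unimodular entry $\bar\gamma_{n-1}$, so that every block takes the standard form $\Xi_{k'}$ with the boundary block becoming $\Xi_{-1}=(1)$. This produces the CMV matrix with the reversed coefficients. Two cases must be checked separately:
\begin{itemize}
\item For $n$ odd, the $1\times1$ unimodular block sits in $\cL$.
\item For $n$ even, it sits in $\cM$, and the roles of $\cL$ and $\cM$ swap after reversal. This is exactly why one first passes from $\cL\cM$ to $\cM\cL$.
\end{itemize}
As a cross-check, I would compare characteristic polynomials via Proposition \ref{prop:CMV1}. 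In terms of the Szeg\H o transfer matrices \eqref{eq:Szego1}, $\Phi_n$ is a matrix entry of an ordered product $\prod_k A_k Z$, and taking the adjoint-transpose of the product reverses the order.

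\textbf{Step 2 (probabilistic conclusion).} Apply Step 1 with $\gamma_k=\tl\alpha_k$. The $\tl\alpha_k$ are independent and rotation invariant, so their laws are also invariant under complex conjugation. Moreover, $\tl\alpha_{n-1}$ is uniform on $\partial\D$ and independent of the rest. Conditionally on $\tl\alpha_{n-1}$, multiplying each independent rotation-invariant variable by a fixed unimodular constant preserves the joint law. Hence $(\gamma'_0,\dots,\gamma'_{n-2})\ed(\alpha_0,\dots,\alpha_{n-2})$, the vector is independent of $\tl\alpha_{n-1}$, and $\gamma'_{n-1}$ is uniform and independent of the others, which gives the law of $\eta$. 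It follows that $\spec\,\cC(\alpha_0,\dots,\alpha_{n-2},\eta)\ed\spec\,\Circ_{n,\beta}\sim$ C$\beta$E$_n$. The support of $\mu_n$ is precisely this spectrum, by the construction of the CMV matrix.

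The main obstacle is Step 1: pinning down the exact phases $\omega_k$ and the parity bookkeeping, so that the reversed, conjugated matrix is genuinely of CMV form (positive $\rho$'s in the right places and $\Xi_{-1}=(1)$). I would first do $n=2$ and $n=3$ by hand to guess the phase pattern, then verify it in general by block multiplication. Fortunately, Step 2 needs only that these phases are deterministic functions of $\tl\alpha_{n-1}$, which is robust even if the precise formula is off.
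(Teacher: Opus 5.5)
The paper does not prove this proposition; it is quoted from Killip--Nenciu. Your reduction to Proposition~\ref{prop:KN1} is therefore a genuinely different, self-contained route, and it is correct. The only ingredient beyond Proposition~\ref{prop:KN1} is the deterministic reversal identity, which holds in the precise form
\[
\spec\,\cC(\gamma_0,\dots,\gamma_{n-2},\gamma_{n-1})=\spec\,\cC(-\gamma_{n-1}\bar\gamma_{n-2},\dots,-\gamma_{n-1}\bar\gamma_0,\gamma_{n-1}),\qquad |\gamma_{n-1}|=1,
\]
that is, $\gamma'_k=-\gamma_{n-1}\bar\gamma_{n-2-k}$ and $\gamma'_{n-1}=\gamma_{n-1}$. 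With this, your Step~2 is complete as written: you condition on $\tl\alpha_{n-1}$, use rotation and conjugation invariance, and note $|\tl\alpha_{n-2-k}|^2\sim\textup{Beta}(1,\beta_k)$. The citation buys brevity. Your argument buys transparency: it shows the reversed-order model is just the Killip--Nenciu model read backwards, via a purely algebraic lemma.

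Your sketch of Step~1 needs two fixes.

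\textbf{Parity is reversed.} For $n$ even, the block patterns of $\cL$ and $\cM$ are each invariant under $k\mapsto n+1-k$, so $J\cL\cM J$ already has the shape of $\cL'\cM'$. It is for $n$ odd that the patterns swap and you must first pass to $\cM\cL$. This is harmless, since the two products are unitarily equivalent.

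\textbf{One gauge is not enough.} A single diagonal similarity on both factors, plus splitting a scalar phase between them, cannot produce CMV form. A similarity leaves diagonal entries unchanged, yet the entries $-\gamma,\bar\gamma$ of each reversed block must be rotated by $\bar\gamma_{n-1}$ and $\gamma_{n-1}$ respectively; this already fails for $n=3$. Use two gauges instead. Let $PQ$ be the reversed product, with the unimodular $1\times 1$ block at the top of $Q$. Write
\[
DPQD^{-1}=(DPE^{-1})(EQD^{-1}),
\]
with $D=\operatorname{diag}(1,\gamma_{n-1},1,\dots)$ and $E=\operatorname{diag}(\gamma_{n-1},1,\gamma_{n-1},\dots)$.

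\textbf{Simpler route.} Promote your ``cross-check'' to the actual proof. Set $A(\gamma)=\mat{1}{-\bar\gamma}{-\gamma}{1}$ and $Z=\operatorname{diag}(z,1)$. Then
\[
\Phi_n(z)=(z,-\bar\gamma_{n-1})\,A(\gamma_{n-2})Z\cdots A(\gamma_0)Z\,(1,1)^\top .
\]
Transposing, with $A(\gamma)^\top=A(\bar\gamma)$, gives
\[
\Phi_n(z)=(z,1)\,A(\bar\gamma_0)Z\cdots A(\bar\gamma_{n-2})Z\,(1,-\bar\gamma_{n-1})^\top .
\]
Now insert $GG^{-1}$ between factors, with $G=\operatorname{diag}(1,-\bar\gamma_{n-1})$. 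Each $A(\bar\gamma_k)Z$ becomes $A(-\gamma_{n-1}\bar\gamma_k)Z$, and the boundary vectors return to $(z,-\bar\gamma_{n-1})$ and $(1,1)^\top$. So the reversed sequence has literally the same characteristic polynomial, and by Proposition~\ref{prop:CMV1} the same spectrum, with no parity cases.
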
 
Note that the spectral weights of $\mu_n$ in Proposition \ref{prop:KN2} are no longer Dirichlet-distributed. The advantage of the reversed ordering of $\alpha_j$ in Proposition \ref{prop:KN2} is that the Verblunsky coefficients and the measures $\mu_n$ are now coupled in the same probability space. This reversed ordering has also been used in the study of the so-called truncated circular beta-ensembles, see \cite{KK,LV_truncated,LLN}. From now on, we work with the following probability space.
\begin{assumption}\label{coupling}
    Let $\{\alpha_j,j\ge 0\}$  be a sequence of independent, rotational invariant random variables such that 
\[
|\alpha_j|^2\sim \textup{Beta}(1,\beta_j),\qquad \beta_j=\tfrac{\beta}{2}(j+1)\qquad \text{for $j\ge 0$.}
\]
Let $\eta \sim \textup{Unif}\,(\partial \D)$ be independent of $\{\alpha_j,j\ge 0\}$, and set $\alpha_{-1}=-1$. 
In this probability space, we introduce the filtration
\begin{align*}
    \mathcal{F}_n = \sigma(\eta,\alpha_0,\cdots,\alpha_{n-1}),\qquad n\ge 0, 
\end{align*}
and define the CMV matrices
\begin{align*}
\cC_n:=\cC(\alpha_0,\cdots,\alpha_{n-2},\eta),\quad n\ge 1,\qquad \text{and}\qquad  \cC_{\infty} = \cC(\alpha_0,\alpha_1,\cdots).
\end{align*}
\end{assumption}

\subsection{Stein's method}
The Stein’s method is a collection of probabilistic techniques that estimate the distance between two probability distributions. In particular, it provides a useful characterization of the Gaussian distributions. Stein’s observation \cite{Stein72} was that a real-valued random variable $X$ has the standard normal $N(0,1)$ distribution if and only if, for every differentiable function $f:\R\mapsto \R$ such that the expectations $\ev [Xf(X)]$ and $\ev[ f'(X)]$ are finite and
\begin{align}\label{eq:Stein_1d}
    \ev[Xf(X)] = \ev[f'(X)].
\end{align}
From this observation, one expects that if $W$ is a random variable such that $\ev[Wf(W)-f'(W)]$ is close to zero for a large class of testing functions $f$, then the law of $W$ should be close to the Gaussian distribution.
% More precisely, for any bounded measurable function $g$, Stein proved that there exists a bounded solution to the differential equation
% \[
% f'(x) - xf(x) = g(x) -\ev[g(X)],\qquad X\sim N(0,1),
% \]
% hence $\ev[Wf(W)-f'(W)]=\ev[g(W)-g(X)]$. Then one may measure the distance between $W$ and $X$ by considering the the supreme of $\ev[g(X)-g(W)]$ over certain class of testing functions. 
We refer to the survey \cite{Chatterjee14} and the monograph \cite{NourdinPeccati} for a detailed treatment of Stein's method.

% Stein's proof in \cite{Stein72} used the method of exchangeable pairs. In \cite{Barbour90} Barbour introduced the diffusion approach to Stein's method. Goldstein and Rinott \cite{GoldsteinRinott_bias,GoldsteinReinert97} developed the method of size-biased and zero-bias couplings for Stein's method. Using Stein's method, Chen and Shao \cite{ChenShao04} obtained optimal Berry-Ess\'een type error bounds. For multivariate normal approximations the Stein's method was carried out by G\" otze \cite{gotze91}, Bolthausen and G\"otze \cite{BolthausenGotze}. Later  Chatterjee and Meckes \cite{CM} generalized the method of exchangeable pairs in the multivariate setting.
% We refer to the survey \cite{Chatterjee14} and the monograph \cite{NourdinPeccati} for a detailed treatment. 

The next result can be viewed as the multidimensional counterpart of Stein’s method.
\begin{proposition}[Lemma 4.1.3 in \cite{NourdinPeccati}]\label{prop:multi-Stein}
Let
$\Sigma = \{\Sigma(i,j): i,j=1,\ldots,d\}$
be a non-negative definite $d\times d$ matrix. Let $X=(X_1,\cdots,X_d)^\top$ be a random vector in $\mathbb{R}^d$. Then $X$ has the Gaussian $N_d(0,\Sigma)$ distribution if and only if
\begin{equation}
\ev\big[\langle X,\nabla f(X)\rangle_{\mathbb{R}^d}\big]
=
\mathbb{E}\big[\langle \Sigma,\operatorname{Hess} f(X)\rangle_{\mathrm{HS}}\big],
\end{equation}
for every $C^2$ function $f:\mathbb{R}^d\to\mathbb{R}$ having bounded first and second derivatives. Here, $\operatorname{Hess} f$ denotes the Hessian of $f$, i.e.~the $d\times d$ matrix whose entries are given by
$\bigl(\operatorname{Hess} f\bigr)_{i,j}
=\frac{\partial^2 f}{\partial x_i\,\partial x_j}$.
\end{proposition}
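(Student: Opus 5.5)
This is the multivariate Stein characterization (Lemma 4.1.3 of \cite{NourdinPeccati}). I would prove the two directions separately, and reduce the converse to an ODE for the characteristic function.

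\emph{Necessity.} Suppose $X\sim N_d(0,\Sigma)$. First treat nondegenerate $\Sigma$. Then $X$ has density $\phi_\Sigma(x)\propto e^{-\frac12 x^\top\Sigma^{-1}x}$, and $\nabla\phi_\Sigma=-\Sigma^{-1}x\,\phi_\Sigma$.

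Write $\ev[X_i\,\partial_i f(X)]$ and replace $x$ by $-\Sigma\nabla\phi_\Sigma$, i.e. $x_i\phi_\Sigma=-\sum_j\Sigma(i,j)\partial_j\phi_\Sigma$. Integration by parts is justified because $\nabla f$ grows at most linearly (bounded second derivatives) while $\phi_\Sigma$ decays like a Gaussian. This gives
\[
\ev[X_i\partial_i f(X)]=\sum_j\Sigma(i,j)\,\ev[\partial_j\partial_i f(X)].
\]
Summing over $i$ yields the identity.

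For degenerate $\Sigma$, write $X=\Sigma^{1/2}G$ with $G\sim N_d(0,I)$. Apply the nondegenerate case to $G$ and the test function $g(y)=f(\Sigma^{1/2}y)$, using $\nabla g=\Sigma^{1/2}\nabla f$ and $\operatorname{Hess} g=\Sigma^{1/2}\operatorname{Hess} f\,\Sigma^{1/2}$. Alternatively, approximate by $\Sigma+\epsilon I$ and let $\epsilon\to0$ by dominated convergence.

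\emph{Sufficiency.} Fix $t\in\R^d$ and let $\varphi(t)=\ev[e^{i\langle t,X\rangle}]$. The test functions $f(x)=\cos\langle t,x\rangle$ and $f(x)=\sin\langle t,x\rangle$ are $C^2$ with bounded first and second derivatives. Combining the two hypotheses as $f=e^{i\langle t,x\rangle}$ gives
\[
i\,\ev[\langle t,X\rangle e^{i\langle t,X\rangle}]=-\,t^\top\Sigma t\,\varphi(t).
\]

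The main obstacle is that the left side equals $\langle t,\nabla\varphi(t)\rangle$ only if $\ev|X|<\infty$, which is not assumed. To handle this, I would first apply the hypothesis to truncated test functions. A clean way is to take $f(x)=h(\langle t,x\rangle)$ with $h$ bounded, $h'$ and $h''$ bounded, and $h(s)\approx s$ on a large window, e.g. $h(s)=\sqrt{1+s^2}$ or a smoothed clipping. Plugging in $f(x)=\sqrt{1+\langle t,x\rangle^2}$, which has bounded first and second derivatives, gives $\ev[\langle t,X\rangle^2/\sqrt{1+\langle t,X\rangle^2}]=\ev[\,t^\top\Sigma t\,(1+\langle t,X\rangle^2)^{-3/2}]\le t^\top\Sigma t$. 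This yields $\ev|\langle t,X\rangle|<\infty$ for every $t$, hence $\ev|X|<\infty$.

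With integrability in hand, differentiation under the expectation shows that, along each ray $s\mapsto\varphi(st)$, the function $\psi(s)=\varphi(st)$ satisfies $\psi'(s)=-s\,t^\top\Sigma t\,\psi(s)$ with $\psi(0)=1$. Hence $\psi(s)=e^{-\frac{s^2}{2}t^\top\Sigma t}$, so $\varphi(t)=e^{-\frac12 t^\top\Sigma t}$. This is the characteristic function of $N_d(0,\Sigma)$, which completes the proof.
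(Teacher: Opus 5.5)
The paper gives no proof of this statement; it quotes Lemma 4.1.3 of Nourdin--Peccati. Your argument is correct and follows the standard route for that lemma. For necessity you use Gaussian integration by parts, reduced to $\Sigma=I$ through $X=\Sigma^{1/2}G$. For sufficiency you derive the ODE $\psi'(s)=-s\,t^\top\Sigma t\,\psi(s)$ for $\psi(s)=\varphi(st)$ from the test functions $\cos\langle t,x\rangle$ and $\sin\langle t,x\rangle$.

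Your extra step with $f(x)=\sqrt{1+\langle t,x\rangle^2}$ is a worthwhile addition. The hypothesis as stated only requires bounded first and second derivatives, so $\mathbb{E}|X|<\infty$ is not given for free, and the step supplies it. It works because the left side is then the expectation of a nonnegative variable. The inequality $s^2/\sqrt{1+s^2}\ge |s|-1$ then turns the bound $\le t^\top\Sigma t$ into $\mathbb{E}|\langle t,X\rangle|<\infty$.

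One slip in your preamble: this $h$ is unbounded and tracks $|s|$, not $s$. That is harmless, since only $h'$ and $h''$ need to be bounded.
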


For our purpose, we use the following version of the multidimensional Stein's lemma. 
For a complex variable $z=x+iy$ the Wirtinger derivatives are defined as usual
\begin{align*} 
    \frac{\partial}{\partial z} = \frac12\left(\frac{\partial}{\partial x}-i\frac{\partial}{\partial y}\right),\qquad  \frac{\partial}{\partial \bar z} = \frac12\left(\frac{\partial}{\partial x}+i\frac{\partial}{\partial y}\right).
\end{align*} 
With a bit abuse of notations, we write $f\in C^1(\CC^k)$ if $\tl f(x_1,y_1,\cdots,x_k,y_k):=f(x_1+iy_1,\cdots,x_k+iy_k)$ has continuous partial derivatives, i.e.~$f$ is real differentiable, and we abbreviate $\partial_z = \frac{\partial}{\partial z}, \partial_{\bar z}=\frac{\partial}{\partial{\bar z}}$.
\begin{proposition}\label{prop:Gauss-Stein}
The vector ${\bf Z}=(Z_1,\dots,Z_k)^\top\in\CC^k$ has independent complex Gaussian entries $Z_j\sim \CC N(0,\sigma_j^2)$ for $1\le j\le k$ if and only if for any $F\in C^1(\CC^k)$ with bounded derivative we have 
\begin{align*}
    \ev[\ol Z_j F({\bf Z})] = \sigma_j^2\ev[\partial_{z_j}F({\bf Z})],\qquad 
     \ev[Z_j F( {\bf Z})] = \sigma_j^2 \ev[\partial_{\bar z_j}F({\bf Z})],\qquad 1\le j\le k.
\end{align*}
\end{proposition}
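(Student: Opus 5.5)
The plan is to reduce Proposition~\ref{prop:Gauss-Stein} to the real multivariate Stein characterization, Proposition~\ref{prop:multi-Stein}, by identifying $\CC^k$ with $\R^{2k}$ through $Z_j=X_j+iY_j$. Under this identification, ``independent $Z_j\sim\CC N(0,\sigma_j^2)$'' means exactly that the vector $(X_1,Y_1,\dots,X_k,Y_k)$ is $N_{2k}(0,\Sigma)$ with $\Sigma=\operatorname{diag}(\sigma_1^2/2,\sigma_1^2/2,\dots,\sigma_k^2/2,\sigma_k^2/2)$. The two complex identities are a repackaging of the one-dimensional Gaussian integration-by-parts identities
\[
\ev[X_jG]=\tfrac{\sigma_j^2}{2}\ev[\partial_{x_j}G], \qquad \ev[Y_jG]=\tfrac{\sigma_j^2}{2}\ev[\partial_{y_j}G],
\]
for real-valued $G\in C^1$ with bounded derivative. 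Indeed, for complex $F=G_1+iG_2$, take real and imaginary parts and combine:
\[
\ev[\ol Z_jF]=\ev[X_jF]-i\ev[Y_jF]=\tfrac{\sigma_j^2}{2}\ev[(\partial_{x_j}-i\partial_{y_j})F]=\sigma_j^2\ev[\partial_{z_j}F].
\]
The conjugate identity follows in the same way, or by applying the first one to $\ol F$. Conversely, adding and subtracting the two complex identities recovers the real identities for $X_j$ and $Y_j$ separately.

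For necessity, I would assume the $Z_j$ are independent complex Gaussians. I condition on all coordinates except $X_j$ (respectively $Y_j$) and apply the one-dimensional Stein identity \eqref{eq:Stein_1d}, rescaled to variance $\sigma_j^2/2$, to the function $x\mapsto G(\dots,x,\dots)$. The bounded-derivative hypothesis gives linear growth of $G$, which makes all expectations finite and justifies Fubini. The real identities then yield the complex ones, as shown above.

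For sufficiency, I take any real $f\in C^2(\R^{2k})$ with bounded first and second derivatives. For each coordinate $u\in\{x_j,y_j\}$, I apply the real identity (obtained from the hypothesis) to $G=\partial_u f$, which is $C^1$ with bounded derivative. This gives $\ev[U\,\partial_uf]=\tfrac{\sigma_j^2}{2}\ev[\partial_u^2f]$. Summing over all $2k$ coordinates gives $\ev[\langle X,\nabla f(X)\rangle]=\ev[\langle\Sigma,\operatorname{Hess}f\rangle_{\mathrm{HS}}]$ with the diagonal $\Sigma$ above. Proposition~\ref{prop:multi-Stein} then gives the law $N_{2k}(0,\Sigma)$, which is exactly independence of the $Z_j$ with $Z_j\sim\CC N(0,\sigma_j^2)$.

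The only point needing care is integrability. One has to check that $\ev[|Z_j||F(\mathbf Z)|]<\infty$ in the hypothesis direction, either implicitly, by reading the identity as including finiteness, or by assuming $\ev|Z_j|^2<\infty$. In the sufficiency direction the test functions $\partial_uf$ are bounded, so this issue does not arise. Apart from that, the argument is a bookkeeping translation between Wirtinger and real derivatives, and I expect no real obstacle.
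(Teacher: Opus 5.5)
Your proposal is correct and follows the paper's route: identify $\CC^k\cong\R^{2k}$ and reduce to Proposition~\ref{prop:multi-Stein} with the diagonal covariance $\operatorname{diag}(\sigma_1^2/2,\sigma_1^2/2,\dots,\sigma_k^2/2,\sigma_k^2/2)$. The differences are minor bookkeeping. You first convert the Wirtinger identities into real ones and test them against $\partial_{x_j}f$ and $\partial_{y_j}f$, whereas the paper's one-line proof invokes $\partial_z\partial_{\bar z}=\tfrac14\Delta$ directly; and your conditioning argument for necessity spells out a step the paper leaves implicit.
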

\begin{proof}
    By identifying $\CC^k \cong \R^{2k}$, the statement follows from Proposition \ref{prop:multi-Stein} and the identity
    $\partial_z\partial_{\bar z}= \partial_{\bar z}\partial_z =\tfrac14\Delta$. 
\end{proof}

The following results can be viewed as the analogue of Stein's method for the Verblunsky coefficients corresponding to the circular beta-ensemble. The proof is a straightforward application of integration by parts, its proof is postponed to Section \ref{sec:Appendix} (the Appendix). 
\begin{proposition}\label{prop:beta-Stein}
    Suppose $\alpha\in\D$ is a rotationally invariant random variable such that $|\alpha|^2\sim\textup{Beta}(1,a)$, then for $f,g\in C^1(\overline{\D})$ we have 
    \begin{align}
       a\ev[\alpha (f\cdot g)(\alpha)]&= \ev[\rho^2(\partial_{\bar z}f\cdot g)(\alpha)] + \ev[\rho^2 (f\cdot \partial_{\bar z} g)(\alpha)]\label{eq:beta-Stein1}\\
       a\ev[\bar \alpha (f\cdot g)(\alpha)] &=  \ev[\rho^2(\partial_{ z}f\cdot g)(\alpha)] + \ev[\rho^2 (f\cdot \partial_{ z} g)(\alpha)]\label{eq:beta-Stein2}
    \end{align}
\end{proposition}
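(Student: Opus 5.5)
We will prove \eqref{eq:beta-Stein1} by a direct computation in polar coordinates; \eqref{eq:beta-Stein2} then follows by complex conjugation. Since $|\alpha|^2\sim\textup{Beta}(1,a)$ and $\alpha$ is rotationally invariant, the density of $\alpha$ with respect to Lebesgue measure $dA(z)$ on $\D$ is
\[
p(z)=\tfrac{a}{\pi}(1-|z|^2)^{a-1}.
\]
To see this, note that if $r=|z|$ then $r^2$ has density $a(1-r^2)^{a-1}$ in $r^2$, so the measure is $a(1-r^2)^{a-1}\,d(r^2)\,\tfrac{d\theta}{2\pi}=\tfrac{a}{\pi}(1-r^2)^{a-1}\,r\,dr\,d\theta$. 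With $\rho^2=1-|z|^2$ we then have
\[
\rho^2 p(z)=\tfrac{a}{\pi}(1-|z|^2)^{a}, \qquad \partial_{\bar z}\bigl[(1-|z|^2)^{a}\bigr]=-a z(1-|z|^2)^{a-1}.
\]
Consequently $\partial_{\bar z}(\rho^2 p)=-a z\, p$. This identity is the whole content of the Stein-type relation.

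Set $h=fg$, so that $\partial_{\bar z}h=(\partial_{\bar z}f)g+f(\partial_{\bar z}g)$ by the Leibniz rule for Wirtinger derivatives. The right-hand side of \eqref{eq:beta-Stein1} equals $\int_\D \rho^2 p\,\partial_{\bar z}h\,dA$. We apply the complex Green/Stokes formula
\[
\int_\D \partial_{\bar z}(\phi)\,dA=\tfrac{1}{2i}\oint_{\partial\D}\phi\,dz
\]
with $\phi=\rho^2 p\,h$. The boundary term vanishes because $\rho^2p=\tfrac a\pi(1-|z|^2)^a=0$ on $|z|=1$ for $a>0$, and $h$ is continuous up to the boundary. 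This gives
\[
\int_\D \rho^2p\,\partial_{\bar z}h\,dA=-\int_\D h\,\partial_{\bar z}(\rho^2p)\,dA=a\int_\D z\,h(z)\,p(z)\,dA=a\,\ev[\alpha h(\alpha)],
\]
which is \eqref{eq:beta-Stein1}.

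The only step that needs care is justifying the integration by parts near the boundary, since $h\in C^1(\overline\D)$ but $p$ may blow up at $|z|=1$ when $a<1$. We handle this by integrating over the disk $|z|\le r<1$, where everything is smooth. The boundary contribution there is bounded by $C(1-r^2)^a\to0$ as $r\to1$. The interior integrands are $\rho^2 p\,\partial_{\bar z}h$, which is bounded, and $a z h p$, which is integrable because $(1-|z|^2)^{a-1}$ is integrable on $\D$ for $a>0$. Dominated convergence then passes the identity to the limit $r\to1$.

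For \eqref{eq:beta-Stein2}, we apply \eqref{eq:beta-Stein1} to $\bar f,\bar g$ and take complex conjugates, using $\overline{\partial_{\bar z}\bar f}=\partial_z f$ and the fact that $\rho$ is real. Alternatively, we repeat the same computation with $\partial_z(\rho^2p)=-a\bar z p$.
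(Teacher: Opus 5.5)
Your proof is correct and is essentially the paper's argument: both integrate by parts against the density $\frac{a}{\pi}(1-|z|^2)^{a-1}$ using $\partial_{\bar z}(1-|z|^2)^{a}=-az(1-|z|^2)^{a-1}$. The differences are cosmetic: you apply Leibniz to $h=fg$ first, whereas the paper moves $\partial_{\bar z}$ onto $(1-|z|^2)^a g$. Your truncation to $|z|\le r<1$ and the conjugation argument for the second identity spell out the boundary-term and symmetry steps that the paper leaves implicit.
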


In the case when $g=1$, we get the following simplified identities
\begin{align}\label{eq:beta-Stein_simple}
    a\ev[\bar \alpha_j f(\alpha_j)] =\ev\left[\rho^2\cdot \partial_{z}f(\alpha)\right],\qquad     a\ev[\alpha f(\bar\alpha)] =\ev\left[\rho^2\cdot \partial_{\bar z}f(\alpha_j)\right].
\end{align}

\subsection{Ablowitz-Ladik system}
In \cite{AL1,AL2} Ablowitz and Ladik introduced the following system of nonlinear equations
\begin{equation}\label{eq:AL}
\begin{split}
        i\frac{d}{dt}\alpha_j&= - (\alpha_{j+1}+\alpha_{j-1}-2\alpha_j) + |\alpha_j|^2(\alpha_{j-1}+\alpha_{j+1})\qquad \text{for $\alpha_j\in \D$,}
    \end{split}
\end{equation}
as the integrable discretizations of the nonlinear Schr\"odinger equation 
\[
i \partial_t \psi = -\partial_x^2 \psi + 2|\psi|^2\psi.
\]
We refer to the system of equations \eqref{eq:AL} as the Ablowitz-Ladik (AL) system. Under the mapping $\alpha_j(t)\mapsto e^{-2it}\alpha_j(t)$, one obtains the system of equations
\[
 i\frac{d}{dt}\alpha_j = -\rho_j^2(\alpha_{j-1}+\alpha_{j+1}),\qquad \rho_j^2:=1-|\alpha_j|^2.
\]
which we call the reduced Ablowitz-Ladik system, see \cite{GravaMazzuca}.

For the finite or periodic reduced AL system, we introduce the Hamiltonian 
\begin{align*}
    H_n = -\sum_{j=0}^{n-1}(\alpha_j\bar\alpha_{j+1} +\bar \alpha_j\alpha_{j+1}),
\end{align*}
and the Poisson bracket
\begin{equation}\label{eq:AL-bracket}
 \{f,g\}_n
 :=i\sum_{j=-1}^{n-1}\rho_j^2
 \left(
   \partial_{\ol\alpha_j}f\,\partial_{\alpha_j}g
  -\partial_{\alpha_j}f\,\partial_{\ol\alpha_j}g
 \right),
 \qquad \rho_j^2:=1-|\alpha_j|^2.
\end{equation}
Then it is direct to check that 
\[
\frac{d}{dt}\alpha_j = \{\alpha_j,H_n\}_n = i\rho_j^2(\alpha_{j-1}+\alpha_{j+1}).
\]
Let us also note that the related Hamiltonian flow generated by the imaginary part of $\sum_{j=0}^{n-1} \bar \alpha_j\alpha_{j+1}$ is known as the Schur flow, see \cite{AL2,SimonSchur}.

The Poisson bracket for the AL system will play an important role in identifying the distributions of the limits in Theorem \ref{thm:main}.  We record below the related identities derived by Cantero and Simon \cite{CanteroSimon} (see also \cite{Nenciu_note} for further results on the Poisson brackets for OPUC).
We provide a sketch of the proof in Section \ref{sec:Appendix} (the Appendix) for completeness.
\begin{proposition}[\cite{CanteroSimon}]\label{prop:Poisson_id}
    Let $\mu$ be a probability measure on the circle supported on $N+1$ points, then for $n\le N$ the Poisson bracket of the monic orthogonal polynomials associated with $\mu$ satisfy
    \begin{align}
        \{\Phi_n(z),\Phi_n(w)\}_n &=  \{\Phi_n^*(z),\Phi_n^*(w)\}_n=0,\label{id:Poisson1}\\
        \{\Phi_n(z),\Phi_n^*(w)\}_n &= \frac{iw}{z-w}(\Phi_n(z)\Phi_n^*(w)-\Phi_n(w)\Phi_n^*(z))\label{id:Poisson2}.
    \end{align}
    The $z=w$ case is extended by continuity.
\end{proposition}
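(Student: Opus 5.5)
We argue by induction on $n$ using the Szeg\H{o} recursion \eqref{eq:Szego1}. The Poisson bracket $\{\cdot,\cdot\}_n$ in \eqref{eq:AL-bracket} is a sum of independent contributions from the coordinates $\alpha_{-1},\dots,\alpha_{n-1}$. Since $\alpha_{-1}=-1$ is constant with $\rho_{-1}^2=0$, that term vanishes. The polynomials $\Phi_n,\Phi_n^*$ depend only on $\alpha_0,\dots,\alpha_{n-1}$. Hence for functions of $\alpha_0,\dots,\alpha_{n-1}$ we have
\[
\{f,g\}_{n}=\{f,g\}_{n-1}+i\rho_{n-1}^2\left(\partial_{\bar\alpha_{n-1}}f\,\partial_{\alpha_{n-1}}g-\partial_{\alpha_{n-1}}f\,\partial_{\bar\alpha_{n-1}}g\right),
\]
and $\{f,g\}_{n-1}$ sees $\alpha_{n-1}$ only as a constant. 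The base case $n=0$ is trivial: $\Phi_0=\Phi_0^*=1$, all brackets vanish, and the right side of \eqref{id:Poisson2} is $0$.

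For the inductive step, write $\alpha=\alpha_{n-1}$ and set $A=\Phi_{n-1}(z)$, $B=\Phi^*_{n-1}(z)$, $C=\Phi_{n-1}(w)$, $D=\Phi^*_{n-1}(w)$. The recursion gives
\[
\Phi_n(z)=zA-\bar\alpha B,\qquad \Phi_n^*(z)=B-\alpha zA,
\]
and similarly at $w$. We expand each bracket bilinearly.

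First take the $\{\cdot,\cdot\}_{n-1}$ part, treating $\alpha,\bar\alpha$ as constants. By the inductive hypothesis $\{A,C\}=\{B,D\}=0$, and both $\{A,D\}$ and $\{B,C\}$ are given by \eqref{id:Poisson2} with the roles of $z,w$ adjusted. Antisymmetry gives $\{B,C\}=-\{C,B\}$, where $\{C,B\}$ is \eqref{id:Poisson2} with $z$ and $w$ swapped.

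Second take the new $\alpha$-term. Here $\Phi_n(z)$ depends only on $\bar\alpha$, with $\partial_{\bar\alpha}\Phi_n(z)=-B$. Likewise $\Phi_n^*(w)$ depends only on $\alpha$, with $\partial_\alpha\Phi_n^*(w)=-wC$.

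For \eqref{id:Poisson1}, the $\alpha$-term of $\{\Phi_n(z),\Phi_n(w)\}$ vanishes because both entries depend only on $\bar\alpha$. The $(n-1)$-part equals
\[
-z\alpha\{A,D\}-\bar\alpha w\{B,C\}=-\bar\alpha\left(z\{A,D\}-w\{C,B\}\right).
\]
Plugging in \eqref{id:Poisson2}, this is $\frac{izw}{z-w}(AD-CB)$ minus the same expression with $z,w$ swapped. Note $\frac{izw}{w-z}(CB-AD)=\frac{izw}{z-w}(AD-CB)$, so the two agree and the difference is $0$. The bracket of $\Phi_n^*$ with itself is handled the same way.

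For \eqref{id:Poisson2}, collect three contributions:
\begin{itemize}
\item the $(n-1)$-part $zw\,\{A,C\}$-type terms plus $|\alpha|^2zw\{B,C\}$-type terms;
\item the cross terms $z\{A,D\}$ and $|\alpha|^2 w\{B,C\}$;
\item the $\alpha$-term $i\rho^2(-B)(-wC)=i\rho^2 wBC$.
\end{itemize}
With $\rho^2=1-|\alpha|^2$, this reduces to checking the polynomial identity
\[
(1-|\alpha|^2)\Big[z\{A,D\}+iwBC\Big]\ \text{(suitably combined)}=\frac{iw}{z-w}\big(\Phi_n(z)\Phi_n^*(w)-\Phi_n(w)\Phi_n^*(z)\right).
\]
Expanding the right side gives $(1-|\alpha|^2)\frac{iw}{z-w}\big(z(AD-CB)+(z-w)BC\big)$ after the $\alpha$ and $\bar\alpha$ cross terms cancel. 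This matches the left side, since $z\{A,D\}=\frac{izw}{z-w}(AD-CB)$ by induction and the leftover is exactly $iwBC$.

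The main obstacle is this bookkeeping, in particular getting the $\{B,C\}$ sign and the $z,w$ swap right. The other delicate point is that the bracket $\{\cdot,\cdot\}_n$ includes the index $n-1$ while $|\alpha_{n-1}|<1$ is needed. This holds because $n\le N$ and $\mu$ is supported on $N+1$ points. The case $z=w$ follows by continuity.
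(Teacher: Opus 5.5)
Your proof is correct and is essentially the paper's argument: induction on $n$ via the Szeg\H{o} recursion, splitting off the new coordinate $\alpha_{n-1}$ (which contributes $i\rho^2 wBC$), using the inductive hypothesis for the old part, and matching against $\Phi_n(z)\Phi_n^*(w)-\Phi_n(w)\Phi_n^*(z)=\rho^2(zAD-wBC)$. There are two small bookkeeping slips. First, $-z\alpha\{A,D\}$ should read $-z\bar\alpha\{A,D\}$. Second, in \eqref{id:Poisson2} the old part is exactly $z\{A,D\}+|\alpha|^2w\{B,C\}$, since the $\{A,C\}$ and $\{B,D\}$ terms vanish by hypothesis and there is no $zw\{B,C\}$ term; it becomes $\rho^2 z\{A,D\}$ via $w\{B,C\}=-z\{A,D\}$, which your proof of \eqref{id:Poisson1} already gives.
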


We conclude this section with a brief discussion on the possibility of extending the Poisson bracket to infinitely many Verblunsky coefficients. In general, this could be quite challenging. To ensure the infinite series 
\begin{align*}
    \{f,g\}_\infty:= i\sum_{j\ge -1}\rho_j^2(\partial_{\bar \alpha_j}f \cdot \partial_{\alpha_j}g - \partial_{\alpha_j}f\cdot\partial_{\bar \alpha_j}g),\qquad \rho_j^2=1-|\alpha_j|^2.
\end{align*}
to be well-defined, a natural sufficient condition is to require that both $f$ and $g$ have square summable derivatives:
\[
f,g\in \mathcal{H}=\{h\in C^1(\CC^\infty): \sum_{j\ge -1} |\partial_{\alpha_j} h|^2 + |\partial_{\bar \alpha_j}h|^2 <\infty\}
\]
However, this condition is too restrictive for our purpose,  as even the first trace $T_1:=-\sum_{j\ge -1}\alpha_j\bar \alpha_{j+1}\notin \mathcal{H}$.

Moreover, uniform convergence of the functions alone is not sufficient to ensure the convergence of their infinite Poisson brackets. For example, consider $f_n=\bar \alpha_{n},g_n=\alpha_{n}$. Then for any sequence $\alpha_n\to 0$ we have $f_n,g_n\to 0$, while
for $N\ge n$ (including $N=\infty$)
\[
\{f_n,g_n\}_N = i\rho_n^2 \to i.
\]
This simple example illustrates that, to make sense of $\{\cdot,\cdot\}_\infty$, one also needs uniform control of the tails of the Poisson brackets, in addition to the convergence of the test functions. See Remark \ref{rmk:infinite} below for further discussion.

\section{A direct proof of the identity \eqref{eq:1st}}\label{sec:1st}
To illustrate the application of Stein's method to the traces of circular beta-ensemble, we first give a direct proof of \eqref{eq:1st}. The Gaussian limits for the traces of higher powers of circular beta-ensemble will be proved in a similar manner, using several key identities involving the Ablowitz-Ladik Poisson bracket.

% By Proposition \ref{prop:KN2}, we have 
% \begin{align*}
%     T_k^{(n)}\ed \sum_{j=1}^n e^{i k\theta_j},\qquad \{e^{i\theta_1},\dots,e^{i\theta_n}\}\ed \text{C$\beta$E}_n.
% \end{align*}
\begin{proof}[A direct proof of \eqref{eq:1st}]
    Recall Assumption \ref{coupling}. We denote the traces of $\cC_\infty$ by $T_k:=\tr(\cC_\infty^k)$ if the limit exists. In particular, we have  
    \[
    T_1=-\lim_{n\to\infty}\sum_{j=-1}^n\alpha_j\bar\alpha_{j+1}=-\sum_{j= -1}^\infty  \alpha_j\bar \alpha_{j+1}
    \]
    with $\alpha_{-1}=-1$. Note that the limit exists since $\ev[|\alpha_n\bar \alpha_{n+1}|^2]=O(n^{-2})$. By Proposition \ref{prop:Gauss-Stein}, it suffices to prove that for any $f\in C^1(\CC)$ with bounded first derivative, we have 
    \begin{align}\label{eq:1st_goal}
        \ev[\ol T_1 f(T_1)]=\tfrac{2}{\beta} \ev[\partial_{z}f(T_1)],\qquad  \ev[ T_1 f( T_1)]=\tfrac{2}{\beta} \ev[\partial_{\bar z}f( T_1)].
    \end{align}
    Note that for $j\ge 0$ we have 
    \begin{align*}
    \partial_{\alpha_j} f(T_1)& = -\bar \alpha_{j+1} \partial_zf(T_1)-\bar\alpha_{j-1}\partial_{\bar z}f(T_1),\\
    \partial_{\bar \alpha_j}f(T_1)& = -\alpha_{j-1}\partial_zf(T_1)-\alpha_{j+1}\partial_{\bar z}f(T_1).
\end{align*}
Recall $\beta_j=\frac{\beta}{2}(j+1)$.
By the Stein's identities \eqref{eq:beta-Stein_simple}, we have 
\begin{align}
    &\tfrac{\beta}{2}\ev[\bar\alpha_j \alpha_{j+1}f(T_1)]\notag\\
    &\quad = \ev[\beta_{j+1}\bar\alpha_j \alpha_{j+1}f(T_1)]-\ev[\beta_j\bar\alpha_j\alpha_{j+1}f(T_1)]\notag\\
    &\quad=\ev[\rho_{j+1}^2\bar \alpha_j \partial_{\bar \alpha_{j+1}}f(T_1)]-\ev[\rho_j^2\alpha_{j+1}\partial_{\alpha_j}f(T_1)]\notag\\
    % &=\ev[-\rho_{j+1}^2|\alpha_j |^2f'(T_1)] -\ev[-\rho_j^2|\alpha_{j+1}|^2 f'(T_1)]\\
&\quad=\ev\left[\partial_zf(T_1)\big(|\alpha_{j+1}|^2\rho_j^2-|\alpha_j|^2\rho_{j+1}^2)\right]+\ev\left[\partial_{\bar z}f(T_1)\big(\rho_j^2\bar\alpha_{j-1}\alpha_{j+1}-\rho_{j+1}^2\bar \alpha_j\alpha_{j+2}\big)\right]\label{eq:telescoping}.
\end{align}
Note that the terms on the right hand side of \eqref{eq:telescoping} are telescoping series.
Together with
\[
\tfrac{\beta}{2}\ev[\alpha_0f(T_1)] =\ev[\rho_0^2\partial_{\bar \alpha_0}f(T_1)]=\ev[\rho_0^2\partial_{z}f(T_1)-\rho_0^2\alpha_1\partial_{\bar z}f(T_1)],
\]
and the identity
\[|\alpha_{j+1}|^2\rho_j^2-|\alpha_j|^2\rho_{j+1}^2 = |\alpha_{j+1}|^2 - |\alpha_j|^2,
\]
we get that 
\begin{align*}
    \tfrac{\beta}{2}\ev[\ol T_1 f(T_1)]&=\tfrac{\beta}{2}\ev[\alpha_0f(T_1)]-\sum_{j=0}^{\infty} \tfrac{\beta}{2}\ev[\bar \alpha_j\alpha_{j+1}f(T_1)] \\
    &= \sum_{j=-1}^\infty \ev[\partial_zf(T_1)( |\alpha_j|^2-|\alpha_{j+1}|^2)] = \ev[\partial_zf(T_1)]
\end{align*}
as claimed. This proves the first equality of \eqref{eq:1st_goal}, the second one can be proved similarly by applying Stein's method to the conjugated terms $\alpha_j\bar \alpha_{j+1}$ for $j\ge -1$. By Proposition \ref{prop:Gauss-Stein}, we conclude that $T_1\sim \CC N(0,\frac{2}{\beta})$.
\end{proof}

\section{Proofs of Theorems \ref{thm:main} and \ref{thm:1st}}\label{sec:proof}
Recall the coupling in Assumption \ref{coupling}, and the CMV matrices $\cC_n,n\ge 1$ and $\cC_\infty$.
Introduce the auxiliary sequence
\[
\cC_{\infty}^{(n)}=\cC(\alpha_0,\dots,\alpha_{n-1}),\qquad  \mathtt{T}_{k}^{(n)}:=\tr\big((\cC_\infty^{(n)})^k\big).
\]
It is direct to check that $\cC_\infty^{(n)}$ is exactly the $n\times n$ upper-left corner of $\cC_\infty$. We outline the proof ideas of Theorems \ref{thm:main} and \ref{thm:1st}:
\begin{enumerate}
    \item For every fixed $k$, we have $|T_k^{(n)}-\mathtt{T}_k^{(n)}|\to 0$ and $\mathtt{T}_k^{(n)}\to T_k$ a.s.~and in $L^2$ as $n\to\infty$, where $T_k=\tr(\cC_\infty^k)$.
    \item Identifying the distribution of $T_k$ using Stein's method. This step also uses the AL Poisson bracket identities for the pre-limiting object $\mathtt{T}_k^{(n)}$.
\end{enumerate}
Note that the first step should be known to experts in the field, see e.g.~\cite{CN_GMC}, we include the proofs in Section \ref{subsec:reduction} for completeness. In Section \ref{subsec:proof}, we identify the distribution of $T_k$ and complete the proofs of Theorems \ref{thm:main} and \ref{thm:1st}. 

\subsection{Replacing $T_k^{(n)}$ by $\mathtt{T}_k^{(n)}$, and convergence of $\mathtt{T}_k^{(n)}$}\label{subsec:reduction}
Let $\Phi_n,\Phi_n^*$ be the sequence of monic and reversed orthogonal polynomials with respect to the sequence of Verblunsky coefficients $\{\alpha_j,j\ge 0\}$. By Proposition \ref{prop:CMV1}, we have 
\begin{align*}
    \Phi_n(z)=\det(z\mathbf{I}_n-\cC_\infty^{(n)}),\quad \Phi_n^*(z) = z^n\ol{\Phi_n(1/\bar z)} = \det(\mathbf{I}_n-z\ol{\cC}_\infty^{(n)}).
\end{align*}
% Let $\Phi_k^*,k\ge 0$ be the sequence of reversed polynomials corresponding to the infinite sequence of Verblunsky coefficients $\alpha_0,\alpha_1,\dots$, i.e.~they satisfy the recursion 
By the Szeg\H o recursion \eqref{eq:Szego1} and the fact that the zeros of $\Phi_k^*$ lie outside of the unit disk, we have 
% \begin{align*}
%     \Phi_{k+1}^*(z)  = \Phi_k^*(z) -\alpha_k z\Phi_k(z).
% \end{align*}
% The zeros of $\Phi_k^*$ lie outside of the unit disk. Taking logarithm, we observe that 
\begin{align}\label{eq:logPhi*}
    \log \Phi_{k+1}^*(z) = \log \Phi_k^*(z) +\log \left(1-\alpha_k zB_k(z)\right),\qquad B_k(z):=\frac{\Phi_k(z)}{\Phi_k^*(z)},
\end{align}
where the Blaschke product $B_k(z)$ is analytic satisfying $|B_k(z)|<1$ for $z\in\D$ and $k\ge 1$, and $B_0\equiv 1$. 
Under the coupling described in Assumption \ref{coupling}, we define
\begin{align}\label{eq:characteristic}
    X_n(z):= \det(\mathbf{I}_n-z\ol{\cC_n})=\Phi_{n-1}^*(z) - \eta z \Phi_{n-1}(z),
\end{align}
then by Proposition \ref{prop:KN2}, the zeros of $X_n$ have the same distribution as the size $n$ circular beta-ensemble, i.e., we have
\begin{align*}
    X_n(z) = \prod_{j=1}^n \left(1- z e^{-i\theta_j}\right),\qquad \{e^{i\theta_j},1\le j\le n\}\sim \text{C$\beta$E}_n.
\end{align*}

Consider the expansion of $\log X_n$ and $\log \Phi_n^*$ around $0$, their Taylor coefficients are related to the traces of $\cC_n$ and $\cC_\infty^{(n)}$ as follows
\begin{align}\label{eq:logPhi*_Taylor}
    \log \Phi_n^*(z) =-\sum_{k\ge 1}\frac{1}{k}\ol{\mathtt{T}}_{k}^{(n)}z^k:= \sum_{k\ge 1}\mathtt{t}_{k}^{(n)} z^k,\quad  \log X_n(z)=-\sum_{k\ge 1}\frac{1}{k}\ol T_k^{(n)}z^k := \sum_{k\ge 1} t_k^{(n)}z^k.
\end{align}
Write $B_n(z) := \sum_{k\ge 0}b_k^{(n)}z^k$, by \eqref{eq:characteristic} we have 
\begin{align*}
    \log X_n(z) = \log \Phi_{n-1}^*(z) + \log (1-\eta z B_{n-1}(z)).
\end{align*}
By the general OPUC theory on the ratio estimates (Theorem 1.7.4 in \cite{OPUC1}), since $\alpha_n\to 0$ a.s.~as $n\to\infty$ (see Proposition \ref{prop:ab} below), we have 
\[
B_n(z) \to 0 \qquad \text{a.s. as $n\to\infty$, uniformly on the compact sets of $\D$}.
\]
This implies $b_k^{(n)}\to 0$ a.s.~as $n\to\infty$ for any fixed $k$. Since $|\eta|=1$ is independent of $\alpha_j,j\ge 0$, we can conclude that $|\log X_n-\log \Phi_{n-1}^*|\to 0$ a.s.~as $n\to\infty$, uniformly on the compact sets of $\D$. This in turn implies that 
\begin{align}\label{eq:t_diff}
    t_{k}^{(n)} - \mathtt{t}_k^{(n-1)} =[z^k]\log(1-\eta zB_{n-1}(z))
    % = - \sum_{j=1}^k \frac{\eta^j}{j}[z^{k-j}] B_{n-1}(z)^j,
\end{align}
vanishes a.s.~as $n\to\infty$. Here we adopt the notation $[z^k]h(z)$ as the coefficient of $z^k$ in the power series expansion of $h(z)$ around the point $z = 0$.

Note that the arguments above were already used in \cite{CN_GMC} to prove the convergence and identify the limits of $\log \Phi_n^*$ and $\log X_n$, see e.g.~Proposition 3.1 in \cite{CN_GMC}. Their proof relies on the CLTs of Jiang and Matsumoto (Theorem \ref{thm:clt1}) for the traces of circular beta-ensemble. Since our aim is to provide a direct proof of Theorem \ref{thm:main}, rather than invoking the results of \cite{CN_GMC} on the convergence, we give a self-contained estimate on $b_k^{(n)}$ and $t_k^{(n)}-\mathtt{t}_k^{(n)}$ below.
\begin{proposition}\label{prop:ab}
We have $\alpha_n\to 0$ a.s.~and in $L^2$ as $n\to\infty$.
For any fixed $k$, we have $b_k^{(n)}\to 0$ a.s.~as $n\to\infty$. Moreover, we have 
\[
 \ev[(b_k^{(n)})^2]\le C_{\beta}\frac{k^2}{n-k}, \qquad k\le n-1,
\]
where $C_\beta$ is independent of $k$ and $n$. 
Consequently, $T_k^{(n)}-\mathtt{T}^{(n)}_k\to 0$ a.s.~and in $L^2$ as $n\to\infty$. 
%     \begin{align*}
%         b_k^{(n)}\to 0\quad \text{a.s. and in $L^2$ as $n\to\infty$,}
%     \end{align*}
% and $ \ev[(b_k^{(n)})^2]\le C_{\beta}\frac{k^2}{n-k}$. 
\end{proposition}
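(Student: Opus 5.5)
We handle the four claims in turn, using only the explicit Beta laws in Assumption \ref{coupling} and the Szeg\H{o} recursion \eqref{eq:Szego1}.

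\textbf{Step 1: $\alpha_n\to 0$.} Since $|\alpha_n|^2\sim\textup{Beta}(1,\beta_n)$, we have $\ev|\alpha_n|^2=1/(1+\beta_n)=O(1/n)$, which gives $L^2$ convergence. For the almost sure statement we use a higher moment: $\ev|\alpha_n|^{2m}=m!\,\Gamma(1+\beta_n)/\Gamma(1+\beta_n+m)=O(n^{-m})$. Taking $m=2$ makes $\sum_n\pr(|\alpha_n|>\epsilon)$ summable, and Borel--Cantelli finishes the step.

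\textbf{Step 2: a recursion for the coefficients of $B_n$.} Write $B_n=\Phi_n/\Phi_n^*$. The Szeg\H{o} recursion gives the Schur-type recursion
\[
B_{n+1}(z)=\frac{zB_n(z)-\bar\alpha_n}{1-\alpha_n zB_n(z)}.
\]
Expanding the denominator as a geometric series, the coefficient $b_k^{(n+1)}$ equals $-\bar\alpha_n\delta_{k0}+b_{k-1}^{(n)}$ plus terms that each contain at least one factor $\alpha_n$ or $\bar\alpha_n$ multiplied by products of earlier coefficients $b^{(n)}_{j}$ with $j<k$. Since $|B_n|\le1$ on $\D$, each $|b_j^{(n)}|\le1$.

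We use the independence of $\alpha_n$ from $\mathcal F_n$ together with rotational invariance: every cross term containing an odd power of $\alpha_n$, or a mismatched power of $\alpha_n$ and $\bar\alpha_n$, has zero mean. This yields a recursion of the form
\[
\ev|b_k^{(n+1)}|^2\le \ev|b_{k-1}^{(n)}|^2+C\,k\,\ev|\alpha_n|^2\cdot(\text{lower-order terms}).
\]
Iterating along the diagonal $(k,n)\to(k-1,n-1)\to\cdots\to(0,n-k)$ and using $b_0^{(m)}=-\bar\alpha_{m-1}$, we get
\[
\ev|b_k^{(n)}|^2\lesssim \frac{1}{n-k}+\sum_{i=1}^{k}\frac{Ck}{n-i}\lesssim C_\beta\frac{k^2}{n-k}.
\]
The main obstacle is controlling the nonlinear terms so that they contribute at most $O(k/(n-k))$ per step rather than compounding. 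The first thing to try is a crude bound: use $|b_j|\le1$ and keep only the single explicit factor $|\alpha_n|^2$, which has mean $O(1/n)$. If the $k$-dependence comes out worse than $k^2$, fall back on the weaker bound, since only the fixed-$k$ limit matters downstream.

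\textbf{Step 3: almost sure convergence of $b_k^{(n)}$.} For fixed $k$, this follows from the ratio asymptotics (Theorem 1.7.4 of \cite{OPUC1}) cited above, given Step 1. Alternatively, induct on $k$ through the recursion of Step 2: $b_k^{(n+1)}-b_{k-1}^{(n)}$ is bounded by $C_k|\alpha_n|\to0$, and the shift along the diagonal reaches index $0$ after $k$ steps.

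\textbf{Step 4: the traces.} By \eqref{eq:t_diff}, $t_k^{(n)}-\mathtt t_k^{(n-1)}$ is a polynomial in $\eta$ and $b_0^{(n-1)},\dots,b_{k-1}^{(n-1)}$ without constant term, with all coefficients $\le1$ in modulus. It therefore tends to $0$ almost surely by Step 3 and in $L^2$ by Step 2, since each monomial is bounded by one $|b_j|$ (all coefficients are bounded by $1$). Next, $\mathtt t_k^{(n)}-\mathtt t_k^{(n-1)}=[z^k]\log(1-\alpha_{n-1}zB_{n-1})$ by \eqref{eq:logPhi*}; its modulus is $\le C_k|\alpha_{n-1}|\to0$ almost surely and in $L^2$. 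Finally, multiply by $-k$ and conjugate to pass from $t_k^{(n)},\mathtt t_k^{(n)}$ to $T_k^{(n)}-\mathtt T_k^{(n)}$.
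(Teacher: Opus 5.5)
\textbf{Gap: the constant in Step 2 is not uniform in $k$.} Your Step 2 does not establish the stated estimate with $C_\beta$ independent of $k$. Your fallback (``fall back on the weaker bound'') drops exactly the claim being asked for.

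The problem is how you bound the nonlinear terms. Expanding the Schur recursion gives, for $k\ge1$,
$b_k^{(n+1)}=\rho_n^2\sum_{m=0}^{k-1}\alpha_n^m c_m$ with $c_m=[z^{k-1-m}]B_n^{m+1}$. Each $c_m$ is a sum of $\binom{k-1}{m}$ monomials in $b_0^{(n)},\dots,b_{k-1-m}^{(n)}$. Suppose you bound these term by term using only $|b_j^{(n)}|\le1$ and $|\alpha_n|^{2m}\le|\alpha_n|^2$, as you propose. Then the second-moment increment is of order $\binom{2k-2}{k-1}\,\ev|\alpha_n|^2$, which is exponential in $k$, not $O(k)$.

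The missing observation is that $B_n^{m+1}$ is itself analytic with $|B_n^{m+1}|\le1$ on $\D$. Cauchy's estimate then gives $|c_m|\le1$ uniformly in $k$ and $m$. A similar slip appears in Step 4: the collected coefficients of $\tfrac1j[z^{k-j}]B^j$ are not all $\le1$; for example, $b_0b_1b_3$ has coefficient $2$ in $\tfrac13[z^4]B^3$. For fixed $k$ this is harmless.

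\textbf{With the fix, your route works and differs from the paper's.} Since $c_m$ is $\mathcal F_n$-measurable and $\alpha_n$ is independent and rotation invariant, the cross terms vanish. This gives $\ev|b_k^{(n+1)}|^2\le\ev|b_{k-1}^{(n)}|^2+\sum_{m\ge1}\ev[\rho_n^4|\alpha_n|^{2m}]=\ev|b_{k-1}^{(n)}|^2+\ev[\rho_n^2|\alpha_n|^2]$. Iterating yields $\ev|b_k^{(n)}|^2\le\sum_{j=n-k-1}^{n-1}\ev|\alpha_j|^2=O(k/(n-k))$. This is sharper than the stated $k^2/(n-k)$, because orthogonality replaces Minkowski.

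The paper instead rewrites the recursion as $B_{n+1}=zB_n-\bar\alpha_n+\alpha_n zB_nB_{n+1}$ and applies Cauchy's estimate to $B_nB_{n+1}$. This gives the pathwise bound $|b_k^{(n)}|\le\sum_{j=n-k-1}^{n-1}|\alpha_j|$, which delivers almost sure convergence and, via Minkowski, the $k^2/(n-k)$ bound together.

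Your Step 3 ``alternative'' is exactly this argument. With Cauchy's estimate its constant is absolute: $|b_k^{(n+1)}-b_{k-1}^{(n)}|\le|\alpha_n|^2+\rho_n^2\sum_{m\ge1}|\alpha_n|^m\le3|\alpha_n|$. So that route alone would have proved the proposition without any second-moment recursion.

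Your Step 4 is more careful than the paper in one respect. You explicitly control $\mathtt t_k^{(n)}-\mathtt t_k^{(n-1)}$, which is needed because \eqref{eq:t_diff} compares $t_k^{(n)}$ with $\mathtt t_k^{(n-1)}$ rather than with $\mathtt t_k^{(n)}$.
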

\begin{proof}
    By definition, we have
    \begin{align*}
        \pr(|\alpha_n|^2\ge x) = (1-x)^{\frac{\beta}{2}(n+1)},
    \end{align*}
    which is summable in $n$ for $x>0$. This shows that $\alpha_n\to 0$ a.s.~by the Borel-Cantelli Lemma. The $L^2$ convergence follows from the exact computation
    \begin{align}\label{eq:alpha_moments}
        \ev[|\alpha_n|^2]= \frac{1}{1+\frac{\beta}{2}(j+1)}.
    \end{align}
    (Indeed, we have $\alpha_n\to 0$ in $L^p$ for every $0<p<\infty$.)

    We now turn to the estimate of $b_k^{(n)}$.  By the Szeg\H o recursion \eqref{eq:Szego1}, we have 
    \begin{align*}
        B_{n+1}(z)= zB_n(z)-\bar\alpha_n +\alpha_n zB_n(z)B_{n+1}(z).
    \end{align*}
    By comparing the Taylor coefficients, we have 
    \begin{align*}
        b_k^{(n+1)} = b_{k-1}^{(n)}+\alpha_n [z^{k-1}](B_nB_{n+1}).
    \end{align*}
    Using the fact that $|(B_nB_{n+1})(z)|<1$ for $z\in\D$, and Cauchy's formula, we have $|[z^{k-1}](B_nB_{n+1})|\le 1$. Iterating this argument leads to the (rough) bound 
    \begin{align*}
        |b_k^{(n+1)}|\le |b_{k-1}^{(n)}|+|\alpha_n|\le |b_{k-2}^{(n-1)}|+|\alpha_n|+|\alpha_{n-1}|\le  \cdots\le \sum_{j=n-k}^{n}|\alpha_j|.
    \end{align*}
    This shows that $b_k^{(n)}\to 0$ a.s.~as $n\to\infty$ for every fixed $k$.
    By Minkowski's inequality and \eqref{eq:alpha_moments}, we have
    \begin{align*}
        \|b_k^{(n)}\|_2\le \sum_{j=n-k-1}^{n-1}\|\alpha_j\|_2\le \frac{k+1}{\sqrt{1+\frac{\beta}{2}(n-k)}}.
    \end{align*}
    This proves the $L^2$ convergence of $b_k^{(n)}$.
    Recall the definitions $T_k^{(n)}=-k\bar t_{k}^{(n)}$ and $\mathtt{T}_k^{(n)}=-k\ol{\mathtt{t}}_k^{(n)}$, the last statement follows from \eqref{eq:t_diff} and the convergence of $b_k^{(n)}$. 
    % we have
    % \begin{align*}
    %     B_{n+1}(z) = \frac{zB_n(z)-\bar \alpha_n}{1-\alpha_n zB_n(z)} = -\bar\alpha_n + \rho_n^2 \sum_{j\ge 0}\alpha_n^j (zB_n(z))^{j+1},
    % \end{align*}
    % hence $b_0^{(n+1)} = -\bar \alpha_n$ and 
    % \begin{align*}
    %     b_k^{(n+1)} = \rho_n^2 \sum_{j=0}^{k-1}\alpha_n^j [z^{k-j-1}]B_n(z)^{j+1},\qquad k\ge 1.
    % \end{align*}
    % Since $\|zB_n\|_{L^\infty}\le 1$ and $B_n$ is analytic in $\D$, by Cauchy's formula we have $|[z^{k-j-1}]B_n(z)^{j+1}|\le 1$ for any $j\le k-1$. This implies the (rough) bound
    % \begin{align*}
    %     b_k^{(n+1)}\le \sum_{j=0}^{k-1}\alpha_n
    % \end{align*}
\end{proof}
Let us note that the convergence in Proposition \ref{prop:ab} also holds in every $L^p$ space for $0< p<\infty$, we restrict ourselves to the $L^2$-convergence for simplicity. 
\begin{remark}\label{rmk:LLN}
In forthcoming joint work with Liu and Najnudel \cite{LLN}, we can actually show that $\sqrt{n}(b_1^{(n)},\cdots,b_k^{(n)})\Rightarrow (\xi_1,\cdots,\xi_k)$, where $\xi_i,1\le i\le k$ are i.i.d.~standard complex normal variables for any fixed $k$. This is a key ingredient in our proof of  
\[
\sqrt{n} B_n(z) \Rightarrow \sqrt{\tfrac{2}{\beta}}\textup{GAF}(z),\qquad \textup{GAF}(z):=\sum_{j\ge 0}\xi_j z^j
\]
in law, uniformly on compact sets of $\D$, where we identify the limit as a constant ($\sqrt{2/\beta}$) multiple of the so-called Gaussian analytic function. See \cite{PV} for the study of the zeros of the GAF function.
\end{remark}

The next result establishes the convergence of $\mathtt{t}_k^{(n)}$ using a martingale argument and the estimates in Proposition \ref{prop:ab}. 
\begin{proposition}\label{prop:mg}
For any fixed $k$, there exists $t_k$ such that 
    \begin{align}
        \mathtt{t}_k^{(n)} \to t_k\quad \text{almost surely, and in $L^2$ as $n\to\infty$}.
    \end{align}
Moreover, we have $t_k=-\frac{1}{k}\ol T_k=-\frac{1}{k}\tr(\ol{\cC_\infty^k})$.
\end{proposition}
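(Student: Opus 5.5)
The plan is to write $\mathtt{t}_k^{(n)}$ as a sum of martingale-type increments using \eqref{eq:logPhi*}. Taking the $z^k$ coefficient there gives
\begin{align*}
\mathtt{t}_k^{(n+1)}-\mathtt{t}_k^{(n)} = [z^k]\log\bigl(1-\alpha_n zB_n(z)\bigr) = -\sum_{m=1}^{k}\frac{\alpha_n^m}{m}[z^{k-m}]B_n(z)^m .
\end{align*}
Here $B_n$ is $\mathcal F_n$-measurable, and $\alpha_n$ is independent of $\mathcal F_n$ and rotationally invariant, so $\ev[\alpha_n^m\mid\mathcal F_n]=0$ for every $m\ge1$. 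Hence each increment $d_n:=\mathtt{t}_k^{(n+1)}-\mathtt{t}_k^{(n)}$ satisfies $\ev[d_n\mid\mathcal F_n]=0$, and $(\mathtt{t}_k^{(n)})_n$ is an $(\mathcal F_{n})$-martingale.

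For the $L^2$ bound, the Cauchy estimate $|[z^{j}]B_n^m|\le 1$ (since $|B_n|<1$ on $\D$) gives $|d_n|\le \sum_{m=1}^k |\alpha_n|^m/m\le k|\alpha_n|$. A sharper estimate isolates the $m=1$ term $-\alpha_n b_{k-1}^{(n)}$. Either way, \eqref{eq:alpha_moments} gives $\ev|d_n|^2\le k^2\ev|\alpha_n|^2=O(k^2/n)$, which is not summable. So orthogonality alone gives only $\sum 1/n$, and this is the main obstacle. I will refine the bound using independence. For $m=1$,
\[
\ev|\alpha_n b_{k-1}^{(n)}|^2=\ev|\alpha_n|^2\,\ev|b_{k-1}^{(n)}|^2\le \frac{C}{n}\cdot\frac{Ck^2}{n-k}
\]
by Proposition \ref{prop:ab}. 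The $m=1$ case with $k=1$ uses $b_0^{(n)}$, which is not small, so I treat it separately; there $d_n=-\alpha_n b_0^{(n)}=\alpha_n\bar\alpha_{n-1}$, whose second moment is $O(n^{-2})$ by independence. For $m\ge2$, the bound $\ev|\alpha_n|^{2m}=O(n^{-m})$ holds, since $|\alpha_n|^2\sim\mathrm{Beta}(1,\beta_n)$ has $p$-th moments of order $n^{-p}$. Hence $\sum_n\ev|d_n|^2<\infty$ for fixed $k$. Since orthogonal martingale increments give $\sup_n\ev|\mathtt{t}_k^{(n)}|^2=\sum_n \ev|d_n|^2<\infty$, the $L^2$-bounded martingale convergence theorem yields a limit $t_k$, with convergence both a.s.\ and in $L^2$.

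To identify $t_k$, note that $\mathtt{T}_k^{(n)}=-k\ol{\mathtt{t}}_k^{(n)}=\tr((\cC_\infty^{(n)})^k)$, where $\cC_\infty^{(n)}$ is the upper-left $n\times n$ corner of $\cC_\infty$. Because $\cC_\infty=\cL\cM$ is a product of block-diagonal matrices, it is banded with bandwidth at most $2$ around the diagonal (five-diagonal). Therefore $(\cC_\infty^{(n)})^k$ and $\cC_\infty^k$ agree in every diagonal entry $(i,i)$ with $i\le n-2k$, since only indices within distance $2k$ enter. Thus $\mathtt{T}_k^{(n)}=\sum_{i\le n-2k}(\cC_\infty^k)_{ii}+R_n$, where $R_n$ is a sum of $O(k)$ boundary diagonal entries. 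Each of these is a polynomial in $\alpha_j,\rho_j$ with $j\in[n-2k-2,n]$, and each monomial contains at least one $\alpha_j$, because a pure $\rho$-product path cannot return to its starting index. So $R_n\to0$ a.s.\ since $\alpha_j\to0$ a.s.

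It follows that the partial sums $\sum_{i\le n}(\cC_\infty^k)_{ii}$ converge a.s.\ to $-k\bar t_k$. This both shows that $T_k=\tr(\cC_\infty^k)$ exists, defined as the limit of these partial traces as in the $k=1$ case, and gives $t_k=-\frac1k\ol T_k$.
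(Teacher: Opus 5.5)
Your proposal is correct and follows essentially the same route as the paper. You use the same martingale decomposition of $\mathtt{t}_k^{(n)}$ via \eqref{eq:logPhi*} and show the increments have summable second moments. You then identify the limit by comparing the diagonal entries of $(\cC_\infty^{(n)})^k$ with those of $\cC_\infty^k$ away from the boundary index $n$.

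If anything, your version is more careful in two places. You bound the $m=1$ increment $\alpha_n b_{k-1}^{(n)}$ using Proposition~\ref{prop:ab}; the crude bound $\ev|[z^{k-1}]B_n|^2\le 1$ that the paper invokes would only give $O(1/n)$. You also justify that every boundary monomial contains some $\alpha_j$, because pure-$\rho$ steps cannot close a loop; the paper's path-counting bound relies on this without saying so.
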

\begin{proof}
    Recall the recursion \eqref{eq:logPhi*}. Since $B_n$ is $\mathcal{F}_n$ measurable, $\alpha_n$ is rotationally invariant and independent of $\mathcal{F}_n$, we conclude that $\log \Phi_n^*(z),n\ge 0$ is a martingale for any fixed $z\in\D$. By  the same argument, one can show that for any fixed $k$, $\mathtt{t}_k^{(n)},n\ge 1$ is a $\mathcal{F}$-martingale. 
    (Note that by a more careful estimate of exponential moments of $\log\Phi_n^*$, one can show that $\log\Phi_n^*(z)\to\log\Phi_\infty^*(z)$  a.s.~as $n\to\infty$, uniformly on compacts sets of $\D$, see Proposition 3.1 in \cite{CN_GMC} for more details.) 

    For our purposes, we will derive the convergence using Proposition \ref{prop:ab}. Fix $k\ge 1$, we define the martingale difference
    \begin{align*}
        m_k^{(n)}:=\mathtt{t}_{k}^{(n+1)}-\mathtt{t}_k^{(n)}=-\sum_{j=1}^k \frac{\alpha_n^j}{j}[z^{k-j}]B_n(z)^j.
    \end{align*}
    Using the independence and rotational invariance of $\alpha_n$, we have 
    \begin{align*}
        \ev[|m_k^{(n)}|^2] = \sum_{j=1}^k\frac{\ev[|\alpha_n|^{2j}]}{j^2}\ev\left[\big|[z^{k-j}]B_n(z)^j\big|^2\right]\le C_\beta\frac{k^2}{n^2}.
    \end{align*}
    Indeed, when $k=1$, we have $\ev[|\alpha_n|^2]=O(n^{-1})$ and $\ev[|b_{0}^{(n)}|^2]=\ev[|\alpha_{n-1}|^2]=O(n^{-1})$. For $k\ge 2$, we simply use the bounds $\ev[|\alpha_n|^{2j}]=O(n^{-j})$ and $\ev\left[\big|[z^{k-j}]B_n(z)^j\big|^2\right]\le 1$. Therefore, $\sum_{n}\ev[|m_k^{(n)}|^2] <\infty$ and we have $\mathtt{t}_{k}^{(n)}$ is bounded in $L^2$. This implies that there exists $t_k$ such that $\mathtt{t}_k^{(n)}\to t_k$ and $\mathtt{T}_k^{(n)}=-k\bar {\mathtt{t}}_k^{(n)}\to -k\bar t_k$ a.s~and in $L^2$ as $n\to\infty$. 
    
    It remains to prove that $-k\bar t_k = T_k=\tr(\cC_\infty^k)$. Define $S_k^{(n)}:=\sum_{j=1}^n [\cC_\infty^k]_{jj}$. Since $\cC_\infty$ is five-diagonal, $S_k^{(n)}$ is well defined, and we have $T_k=\lim_{n\to\infty} S_k^{(n)}$ whenever the limit exists. Therefore, it suffices to $S_k^{(n)}-\mathtt{T}_k^{(n)}\to 0$ a.s.~and in $L^2$ as $n\to\infty$ for any fixed $k$. The CMV matrices $\cC_n$ and $\cC_\infty^{(n)}$ are five-diagonal, and only differ by the last Verblunsky coefficient. Therefore, we have $[(\cC_\infty^{(n)})^k]_{jj}=[\cC_n^k]_{jj}$ for $j\le n-k$. Every entry of $\cC_\infty,\cC_n$ has modulus of at most one, and there are at most $k5^k$ paths that contains the random variable $\eta$ in $\cC_n$. Thus, we get the following crude bound
    \[
    |S_k^{(n)}-\mathtt{T}_k^{(n)}| \le k5^k\max_{|j-n|\le  k}|\alpha_j|\le  k5^k\sum_{j=n-k}^{n+k}|\alpha_j|.
    \]
   Applying Proposition \ref{prop:ab} shows that the error term $ |S_k^{(n)}-\mathtt{T}_k^{(n)}| \to 0$ a.s.~and in $L^2$ as $n\to\infty$. This finishes the proof. 
\end{proof}

\subsection{Identifying the distributions of $T_k$}\label{subsec:proof}
The goal of this section is to complete the proof of Theorems \ref{thm:main} and \ref{thm:1st}. We first prove Theorem \ref{thm:1st} using the convergence results established in Section \ref{subsec:reduction} and the proof of \eqref{eq:1st} in Section \ref{sec:1st}.
\begin{proof}[Proof of Theorem \ref{thm:1st}]
  Propositions \ref{prop:ab} and \ref{prop:mg} prove that for any fixed $k$, we have 
\[
(T_1^{(n)},\cdots, T_k^{(n)})\to (T_1,\cdots,T_k)\qquad \text{a.s.~and in $L^2$ as $n\to\infty$,}
\]
where $T_i=\tr(\cC_\infty^i)$. Together with the distributional identity \eqref{eq:1st} proved in Section \ref{sec:1st} completes the proof of the statement. 
\end{proof}

In order to complete the proof of Theorem \ref{thm:main}, it remains to identify the joint distribution of $T_k$ for general $k$. Recall that $\mathtt{T}_k^{(n)}$ and $T_k$ are defined as traces of $(\cC_\infty^{(n)})^k$ and $\cC_\infty^k$, respectively. These traces can be related to the Taylor coefficient of $\log\Phi_n^*$ and its limit. We first compute the Ablowitz-Ladik Poisson bracket of $\log \Phi_k^*$ using  Proposition \ref{prop:Poisson_id}.
\begin{proposition}\label{prop:logAL}
    For any $N\ge n$ and $z,w\in\D$ we have 
    \begin{align}
        \{\log \Phi_n^*(z),\log \Phi_n^*(w)\}_N &= 0,\label{id:logPoisson1}\\
        \{\log \overline{\Phi_n^*}(z),\log \Phi_n^*(w)\}_N &= i\frac{\bar zw}{1-\bar z w}(1-\ol{B_n}(z)B_n(w)),\label{id:logPoisson2}.
    \end{align}
    For $i,j\ge 1$ and $N\ge n$ we have
    \begin{align}
        \{\mathtt{t}_i^{(n)},\mathtt{t}_j^{(n)}\}_N=0,\qquad \{\ol{\mathtt{t}}_i^{(n)},\mathtt{t}_j^{(n)}\}_N=i\left(\delta_{ij}-\sum_{k=1}^{\min\{i,j\}}\bar b_{i-k}^{(n)}b_{j-k}^{(n)}\right)\label{id:logPoisson3}.
    \end{align}
\end{proposition}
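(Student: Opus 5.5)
The plan is to reduce everything to Proposition \ref{prop:Poisson_id} using two properties of the bracket: it is a biderivation, and $\{\cdot,\cdot\}_N$ only involves derivatives in $\alpha_j,\bar\alpha_j$ with $-1\le j\le N-1$. First, $\Phi_n$ and $\Phi_n^*$ depend only on $\alpha_0,\dots,\alpha_{n-1}$ by the Szeg\H o recursion \eqref{eq:Szego1}. Also $\alpha_{-1}=-1$ is a constant, so the $j=-1$ term vanishes. Hence for $N\ge n$ all terms with $j\ge n$ drop out, and $\{f,g\}_N=\{f,g\}_n$ for $f,g$ functions of $\Phi_n,\Phi_n^*$ (and their conjugates). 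It therefore suffices to work with $\{\cdot,\cdot\}_n$.

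Next, for $z,w\in\D$ the zeros of $\Phi_n^*$ lie outside $\overline{\D}$, so $\log\Phi_n^*(w)$ is a smooth function of the finitely many coefficients $\alpha_0,\dots,\alpha_{n-1}$. By the chain rule (Leibniz rule for $\{\cdot,\cdot\}_n$),
\[
\{\log F,\log G\}_n=\frac{\{F,G\}_n}{FG}.
\]
Identity \eqref{id:logPoisson1} is then immediate from \eqref{id:Poisson1}. For \eqref{id:logPoisson2}, we write the conjugate as a polynomial in the $\alpha$'s: $\overline{\Phi_n^*(z)}=\bar z^n\Phi_n(u)$ with $u=1/\bar z$. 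Since \eqref{id:Poisson2} is an identity between polynomials in $(z,w)$ with coefficients in $\alpha,\bar\alpha$, it holds for all complex arguments, in particular at $u$. This gives
\[
\{\log\overline{\Phi_n^*}(z),\log\Phi_n^*(w)\}_n=\frac{iw}{u-w}\Big(1-\frac{\Phi_n(w)\Phi_n^*(u)}{\Phi_n(u)\Phi_n^*(w)}\Big).
\]
We then use $\Phi_n^*(u)=u^n\overline{\Phi_n(z)}$ and $\Phi_n(u)=u^n\overline{\Phi_n^*(z)}$, so that $\Phi_n^*(u)/\Phi_n(u)=\overline{B_n(z)}$. We also use $\frac{iw}{1/\bar z-w}=\frac{i\bar zw}{1-\bar zw}$. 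Together these give \eqref{id:logPoisson2}.

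Finally, \eqref{id:logPoisson3} follows by comparing Taylor coefficients. We have $\log\Phi_n^*(w)=\sum_{j\ge1}\mathtt{t}_j^{(n)}w^j$ and $\log\overline{\Phi_n^*}(z)=\sum_{i\ge1}\bar{\mathtt{t}}_i^{(n)}\bar z^i$. These series, together with their derivatives in the finitely many variables $\alpha_0,\dots,\alpha_{n-1}$, converge locally uniformly for $z,w\in\D$. Hence the bracket can be taken termwise, and the left side of \eqref{id:logPoisson2} equals $\sum_{i,j}\{\bar{\mathtt t}_i^{(n)},\mathtt t_j^{(n)}\}_n\bar z^iw^j$. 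On the right we expand
\[
\sum_{m\ge1}(\bar zw)^m\Big(1-\sum_{a,c\ge0}\bar b_a^{(n)}b_c^{(n)}\bar z^aw^c\Big).
\]
The coefficient of $\bar z^iw^j$ is $\delta_{ij}-\sum_{m=1}^{\min\{i,j\}}\bar b^{(n)}_{i-m}b^{(n)}_{j-m}$, which is \eqref{id:logPoisson3}. The vanishing of $\{\mathtt t_i^{(n)},\mathtt t_j^{(n)}\}_N$ follows from \eqref{id:logPoisson1} in the same way.

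The main point needing care is the conjugation step. We must check that \eqref{id:Poisson2} may legitimately be evaluated at $u=1/\bar z$, which lies outside the disk. This holds because it is a polynomial identity in the coefficients. We must also check that the bracket of a conjugated function is handled correctly by expressing $\overline{\Phi_n^*(z)}$ as a genuine polynomial in $(\alpha,\bar\alpha)$ rather than conjugating the bracket itself. The remaining steps (Leibniz rule, termwise expansion) are routine.
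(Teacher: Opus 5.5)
Your proposal is correct and follows the paper's own proof essentially step for step. Both arguments apply the log-chain rule to \eqref{id:Poisson1}, substitute $\overline{\Phi_n^*(z)}=\bar z^n\Phi_n(1/\bar z)$ into \eqref{id:Poisson2} together with $\Phi_n^*(1/\bar z)/\Phi_n(1/\bar z)=\overline{B_n(z)}$, and then compare the coefficients of $\bar z^i w^j$. The extra justifications you give (that $\{\cdot,\cdot\}_N=\{\cdot,\cdot\}_n$ on functions of $\alpha_0,\dots,\alpha_{n-1}$, that \eqref{id:Poisson2} is a polynomial identity and so can be evaluated at $1/\bar z$, and that the bracket may be taken termwise in the Taylor series) are exactly the points the paper leaves implicit.
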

\begin{proof}
    By \eqref{id:Poisson1} we have 
    \begin{align*}
        \{\log\Phi_n^*(z),\log\Phi_n^*(w)\}_N = \frac{1}{\Phi_n^*(z)\Phi_n^*(w)}\{\Phi_n^*(z),\Phi_n^*(w)\}_N =0,
    \end{align*}
    which proves \eqref{id:logPoisson1}.
    To prove \eqref{id:logPoisson2}, we note that  
    \[
    \ol{\Phi_n^*}(z)=\bar z^n\Phi_n(1/\bar z),\qquad \frac{\Phi_n^*(1/\bar z)}{\Phi_n(1/\bar z)} = \frac{\ol{\Phi_n}(z)}{\ol{\Phi_n^*}(z)}=\ol{B_n}(z).
    \]
    Thus, by \eqref{id:Poisson2} we have  
    \begin{align*}
        \{\log\ol{\Phi_n^*}(z),\log\Phi_n^*(w)\}_N &= \frac{1}{\ol{\Phi_n^*}(z)\Phi_n^*(w)}\{\ol{\Phi_n^*}(z),\Phi_n^*(w)\}_N\\
        &=\frac{\bar z^n}{\ol{\Phi_n^*}(z)\Phi_n^*(w)}\left(\frac{iw}{\frac{1}{\bar z}-w}\big(\Phi_n(1/\bar z)\Phi_n^*(w)-\Phi_n(w)\Phi_n^*(1/\bar z)\big)\right)\\
        &=\frac{iw\bar z}{1-w\bar z}\left(1-\ol{B_n}(z)B_n(w)\right),
    \end{align*}
    where  in the last step we used 
    \[
    \frac{\bar z^n\Phi_n^*(1/\bar z)}{\ol{\Phi_n^*}(z)}= \frac{\ol{\Phi_n}(z)}{\ol{\Phi_n^*}(z)}=\ol{B_n}(z).
    \]
   Finally, since
    \begin{align*}
        \frac{\bar zw}{1-\bar zw} = \sum_{j\ge 1}(\bar zw)^j,\qquad \ol{B_n}(z)B_n(w) = \sum_{k,l\ge 0} \bar b_{k}^{(n)}b_l^{(n)} \bar z^k w^l,
    \end{align*}
     comparing the Taylor coefficients of the left and right hand sides of \eqref{id:logPoisson1} and \eqref{id:logPoisson2} yields \eqref{id:logPoisson3}. This completes the proof. 
\end{proof}

We are now ready to prove Theorem \ref{thm:main}.
\begin{proof}[Proof of Theorem \ref{thm:main}]
Recall that $|\alpha_j|^2\sim \textup{Beta}(1,\beta_j)$ with $\beta_j=\frac{\beta}{2}(j+1)$, and recall that the Poisson bracket is defined as
\begin{align*}
    \{F,G\}_{n} = i\sum_{j=-1}^{n-1}\rho_j^2(\partial_{\bar \alpha_j}F\partial_{ \alpha_j}G-\partial_{ \alpha_j}F\partial_{\bar \alpha_j}G).
\end{align*}
Taking expectation for each term and applying Proposition \ref{prop:beta-Stein}, we have 
\begin{align*}
    \ev[\rho_j^2\partial_{\bar \alpha_j}F\partial_{ \alpha_j}G] &=  \beta_j \ev[\alpha_j F\partial_{\alpha_j}G]-\ev[\rho_j^2 F\partial_{\bar \alpha_j}\partial_{\alpha_j}G],\\
    \ev[\rho_j^2\partial_{ \alpha_j}F\partial_{\bar \alpha_j}G]&=  \beta_j \ev[\bar \alpha_j F\partial_{\bar \alpha_j}G]-\ev[\rho_j^2 F\partial_{ \alpha_j}\partial_{\bar \alpha_j}G],
\end{align*}
wheneve the Wirtinger derivatives exist and have finite expectation.
Taking the difference yields that 
\begin{align*}
    \ev[\rho_j^2(\partial_{\bar \alpha_j}F\partial_{ \alpha_j}G-\partial_{ \alpha_j}F\partial_{\bar \alpha_j}G)] = \beta_j \ev[F(\alpha_j\partial_{\alpha_j}-\bar\alpha_j\partial_{\bar \alpha_j})G].
\end{align*}
Summing over $j$, we obtain that 
\begin{align}\label{eq:Poisson-D}
    \ev[\{F,G\}_n] = i \frac{\beta}{2}\ev[F\mathcal{D}_n G],
\end{align}
where we introduce the differential operator  
\begin{align*}
    \mathcal{D}_n = \sum_{j= -1}^{n-1}(j+1)(\alpha_j\partial_{\alpha_j}-\bar \alpha_j\partial_{\bar \alpha_j}).
\end{align*}
We claim that the traces $\mathtt{T}_k^{(n)}$ can be viewed as the 'eigenvectors' of $\cD_n$. The proof of the claim is postponed after the proof of the theorem.
\begin{claim}\label{claim:D}
For any fixed $k$, we have $\mathcal{D}_n\ol{\mathtt{T}}_k^{(n)} = k\ol{\mathtt{T}}_k^{(n)}$.
    % \begin{align*}
    %     \mathcal{D}_n\ol{\mathtt{T}}_k^{(n)} = k\ol{\mathtt{T}}_k^{(n)},\qquad \mathcal{D}_\infty \ol{T}_k = k \ol{T}_k.
    % \end{align*}
\end{claim}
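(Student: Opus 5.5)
The plan is to read $\mathcal{D}_n$ as the infinitesimal generator of a one-parameter group of phase rotations of the Verblunsky coefficients. Then show that $\ol{\mathtt{T}}_k^{(n)}$ transforms under this group by the character $e^{ik\theta}$. Concretely, for $\theta\in\R$ set
\[
\alpha_j(\theta):=e^{i(j+1)\theta}\alpha_j,\qquad -1\le j\le n-1 .
\]
Since $j+1=0$ for $j=-1$, the convention $\alpha_{-1}=-1$ is preserved. For any real-differentiable $F$ of $(\alpha_0,\dots,\alpha_{n-1})$, the chain rule for Wirtinger derivatives, using $\frac{d}{d\theta}\alpha_j(\theta)=i(j+1)\alpha_j(\theta)$ and $\frac{d}{d\theta}\ol{\alpha_j(\theta)}=-i(j+1)\ol{\alpha_j(\theta)}$, gives
\[
\frac{d}{d\theta}\Big|_{\theta=0}F(\alpha_0(\theta),\dots,\alpha_{n-1}(\theta))= i\sum_{j=-1}^{n-1}(j+1)\big(\alpha_j\partial_{\alpha_j}-\bar\alpha_j\partial_{\bar\alpha_j}\big)F = i\,\mathcal{D}_nF .
\]
So it suffices to show that $\ol{\mathtt{T}}_k^{(n)}(\theta)=e^{ik\theta}\,\ol{\mathtt{T}}_k^{(n)}$, and then differentiate at $\theta=0$.

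The key step is to track how the orthogonal polynomials change under the rotation, by induction on the Szeg\H{o} recursion \eqref{eq:Szego1}. Write $\lambda:=e^{-i\theta}$, so that $\alpha_j(\theta)=\bar\lambda^{j+1}\alpha_j$. The claim to prove by induction on $m\le n$ is
\[
\Phi_m^{(\theta)}(z)=\lambda^m\Phi_m(\bar\lambda z),\qquad \Phi_m^{*(\theta)}(z)=\Phi_m^*(\bar\lambda z).
\]
The second identity follows from the first by the definition of the reversed polynomial and $|\lambda|=1$. For the inductive step, substitute these into $\Phi_{m+1}=z\Phi_m-\bar\alpha_m\Phi_m^*$. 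The right-hand side becomes $\lambda^{m+1}(\bar\lambda z)\Phi_m(\bar\lambda z)-\lambda^{m+1}\bar\alpha_m\Phi_m^*(\bar\lambda z)=\lambda^{m+1}\Phi_{m+1}(\bar\lambda z)$, because $\ol{\alpha_m(\theta)}=\lambda^{m+1}\bar\alpha_m$.

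Consequently $\log\Phi_n^{*(\theta)}(z)=\log\Phi_n^*(e^{i\theta}z)$. Comparing Taylor coefficients in \eqref{eq:logPhi*_Taylor} gives $\mathtt{t}_k^{(n)}(\theta)=e^{ik\theta}\mathtt{t}_k^{(n)}$, and hence $\ol{\mathtt{T}}_k^{(n)}(\theta)=-k\,\mathtt{t}_k^{(n)}(\theta)=e^{ik\theta}\ol{\mathtt{T}}_k^{(n)}$. Differentiating at $\theta=0$ yields $i\mathcal{D}_n\ol{\mathtt{T}}_k^{(n)}=ik\,\ol{\mathtt{T}}_k^{(n)}$, which is the claim.

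As a cross-check, one can argue at the matrix level instead. The rotated CMV matrix $\cC(\alpha_0(\theta),\dots,\alpha_{n-1}(\theta))$ should equal $e^{-i\theta}U\cC U^{-1}$ for a suitable diagonal unitary $U$, which gives $\mathtt{T}_k^{(n)}(\theta)=e^{-ik\theta}\mathtt{T}_k^{(n)}$ directly. Equivalently, every monomial $\prod_j\alpha_j^{a_j}\bar\alpha_j^{b_j}$ appearing in $\ol{\mathtt{T}}_k^{(n)}$ has weighted degree $\sum_j(j+1)(a_j-b_j)=k$, and $\mathcal{D}_n$ multiplies such a monomial by exactly that number.

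The main obstacle is bookkeeping of conventions rather than any analytic difficulty. One must get the direction of the rotation ($\lambda$ versus $\bar\lambda$) right so that the final eigenvalue is $+k$ and not $-k$, consistent with the sign of $\mathcal{D}_n$ and with $\ol{\mathtt{T}}_k^{(n)}=-k\,\mathtt{t}_k^{(n)}$. One must also note that the index $j=-1$ contributes nothing, both because its weight $j+1$ vanishes and because $\alpha_{-1}$ is a constant. Smoothness poses no issue, since $\mathtt{T}_k^{(n)}$ is a polynomial in the $\alpha_j,\bar\alpha_j$ and $\rho_j$, and the $\rho_j$ are rotation invariant.
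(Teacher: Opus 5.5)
Your argument is correct and is essentially the paper's. The paper proves the infinitesimal identities $\mathcal{D}_n\Phi_n^*(z)=z\partial_z\Phi_n^*(z)$ and $\mathcal{D}_n\Phi_n(z)=(z\partial_z-n)\Phi_n(z)$ directly by induction on the Szeg\H{o} recursion, then compares Taylor coefficients of $\log\Phi_n^*$. You run the same induction for the integrated rotation covariance $\Phi_n^{*(\theta)}(z)=\Phi_n^*(e^{i\theta}z)$ and differentiate at $\theta=0$, which is exactly the viewpoint the paper describes in the remark after its proof. Your sign bookkeeping ($\ol{\mathtt{T}}_k^{(n)}=-k\,\mathtt{t}_k^{(n)}$, eigenvalue $+k$) is right; the final ``$-k$'' in the paper's own proof of the claim is a typo.
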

Let ${\mathbf T}^{(n)}=(\mathtt{T}_1^{(n)},\mathtt{T}_2^{(n)},\cdots,\mathtt{T}_k^{(n)})$, $F\in C^1(\CC^k)$ with bounded Wirtinger derivatives, and choose 
$G(\mathbf{T}^{(n)})=\ol{\mathtt{T}}_j^{(n)}$. By \eqref{eq:Poisson-D} and Claim \ref{claim:D}, we have  
    \begin{align}\label{eq:remove_Poisson}
        \ev\left[\{F(\mathbf{T}^{(n)}),\overline{\mathtt{T}}_j^{(n)}\}_n\right] &= i\frac{\beta}{2}\ev\left[F(\mathbf{T}^{(n)})\mathcal{D}_n\overline{\mathtt{T}}_j^{(n)}\, \right]=i\frac{\beta j}{2}\ev\left[F(\mathbf{T}^{(n)})\overline{\mathtt{T}}_j^{(n)}\,\right].
    \end{align}
    On the other hand, since $\mathtt{T}_j^{(n)} = -j\ol{\mathtt{t}}_j^{(n)}$, it follows from \eqref{id:logPoisson3} that 
    \begin{equation}\label{eq:remove_Poisson2}
    \{F(\mathbf{T}^{(n)}),\ol {\mathtt{T}}_j^{(n)}\}_n = i\sum_{m=1}^k mj\left(\delta_{mj}-\sum_{l=1}^{\min\{m,j\}}\bar b_{m-l}^{(n)}b_{j-l}^{(n)}\right)\partial_{ z_m}F(\mathbf{T}^{(n)}).
    \end{equation}
    By Propositions \ref{prop:ab} and \ref{prop:mg}, we have 
    \[
    \bar b_{m-l}^{(n)} b_{j-l}^{(n)}\to 0,\quad l\le \min\{j,m\}, \qquad \mathtt{T}_j^{(n)}\to T_j,\quad 1\le j\le k
    \]
    a.s.~and in $L^2$ as $n\to\infty$.

    We may now pass to the limit. Let $\mathbf{T}=(T_1,\cdots,T_k)$, by \eqref{eq:remove_Poisson} and \eqref{eq:remove_Poisson2} we have
    \begin{align*}
        \ev[\ol T_jF(\mathbf{T})]  &= \lim_{n\to\infty}\ev[\ol {\mathtt{T}}_j^{(n)}F(\mathbf{T}^{(n)})]\\
        &=\lim_{n\to\infty}-i\frac{2}{\beta j}\ev\left[\{F(\mathbf{T}^{(n)}),\overline{\mathtt{T}}_j^{(n)}\}_n\right] \\
        &=\lim_{n\to\infty}\frac{2}{\beta j}\sum_{m=1}^k \ev\left[mj\Big(\delta_{mj}-\sum_{l=1}^{\min\{m,j\}}\bar b_{m-l}^{(n)}b_{j-l}^{(n)}\Big)\partial_{ z_m}F(\mathbf{T}^{(n)})\right]\\
        &=\frac{2j}{\beta}\ev\left[\partial_{z_j}F(\mathbf{T})\right]
    \end{align*}
    as desired. Applying the same argument to the complex conjugates gives that 
    \begin{align*}
        \ev[T_j F(\mathbf{T}) ]  = \frac{2j}{\beta}\ev[\partial_{\bar z_j}F(\mathbf{T})]. 
    \end{align*}
    Thus, by Stein's lemma in Proposition \ref{prop:Gauss-Stein}, we conclude that $T_j,1\le j\le k$ are independent random variables such that $T_j\sim\CC N(0,\frac{2j}{\beta})$. This finishes the proof.
\end{proof}
We now turn to the proof of Claim \ref{claim:D}. 
\begin{proof}[Proof of Claim \ref{claim:D}]
Recall that $\mathtt{T}_k^{(n)}=-k\ol{\mathtt{t}}_k^{(n)}$, where $\mathtt{t}_k^{(n)}$ is the $k$-th Taylor coefficient of $\log \Phi_n^*$. We claim that 
\begin{align}\label{eq:D-Phi}
\cD_n\Phi_n^*(z)  = z\partial_z\Phi_n^*(z),\qquad \cD \Phi_n(z) = (z\partial_z -n) \Phi_n.
\end{align}
From this, it follows that 
\[
\cD_n \log \Phi_n^*(z) = \frac{\cD_n\Phi_n^*(z)}{\Phi_n^*(z)} = \frac{z\partial_z \Phi_n^*(z)}{\Phi_n^*(z)} = z\partial_z \log\Phi_n^*(z).
\]
By comparing the Taylor coefficients again, we obtain that
\[
\cD_n \mathtt{t}_k^{(n)}=[z^{k-1}](\partial_z\log \Phi_n^*(z)) = k\mathtt{t}_k^{(n)},
\]
which implies $\cD_n \ol{\mathtt{T}}_k^{(n)}=-k\ol{\mathtt{T}}_k^{(n)}$. 

It remains to prove \eqref{eq:D-Phi}. We proceed by induction on $n$. The case when $n=0$ is trivial. Assuming the identities \eqref{eq:D-Phi} hold for $k\le n$. Then, by Szeg\H o recursion \eqref{eq:Szego1} we have
\begin{align*}
    \cD_{n+1}\Phi_{n+1}^*(z) &= \cD_{n+1}(\Phi_n^*(z)-\alpha_n z\Phi_n(z))\\
    &= \cD_n\Phi_n^*(z)- z\Phi_n(z)\cD_{n+1}(\alpha_n) -\alpha_n z\cD_n\Phi_n(z),
\end{align*}
where we used the fact that $\Phi_n,\Phi_n^*$ only depend on $\alpha_0,\alpha_1,\cdots,\alpha_{n-1}$ in the second step. Since $\cD_{n+1}\alpha_n=(n+1)\alpha_n$, by the induction hypothesis \eqref{eq:D-Phi} we get 
\begin{align*}
     \cD_{n+1}\Phi_{n+1}^*(z) 
     % &= z\partial_z\Phi_n^*(z)-(n+1)\alpha_nz\Phi_n(z)-\alpha_n z(z\partial_z-n)\Phi_n(z)
     &=z\partial_z\Phi_n^*(z) - \alpha_nz\Phi_n(z) - \alpha_n z^2\partial_z\Phi_n(z)\\
     &=z\partial_z(\Phi_n^*(z)-\alpha_nz\Phi_n(z))=z\partial_z\Phi_{n+1}^*(z)
\end{align*}
as claimed. Repeating the same argument gives the second identity in \eqref{eq:D-Phi}. This completes the induction, and hence the proof. 
\end{proof}
We note that, by standard OPUC theory, the mapping
\begin{align}\label{eq:rotation}
    \alpha_j\mapsto e^{-i(j+1)\theta}\alpha_j,\qquad j\ge 0,
\end{align}
corresponds to rotating the probability measure determined by $\bm{\alpha}=(\alpha_0,\alpha_1,\cdots)$ by the angle $\theta$. Denote the transformed sequence by $\bm{\alpha}(\theta)$ with $\alpha_j(\theta)=e^{-i(j+1)\theta}\alpha_j$, then one can check that 
\[
\Phi_n^*(z,\bm{\alpha}(\theta)) = \Phi_n^*(e^{-i\theta}z,\bm{\alpha}),
\]
where $\Phi_n^*(\cdot,\bm{\alpha})$ denotes the $n$-th reversed polynomial associated with the sequence of Verblunsky coefficients $\bm{\alpha}$. It follows that $\mathtt{t}_k^{(n)}({\bm \alpha}(\theta)) = e^{-ik\theta}\mathtt{t}_k^{(n)}({\bm\alpha})$. Then the differential operator $-i\cD_n$ is just the infinitesimal generator of the action \eqref{eq:rotation}, and the statement \eqref{eq:D-Phi} follows from a direct differentiation at $\theta=0$.

We conclude the paper with two remarks on the limiting objects and the rate of convergence. 
\begin{remark}\label{rmk:infinite}
Using the infinitesimal generator argument, one may extend the definition of $\cD_n$ to the infinite dimensional setting
\[
\cD_\infty = \sum_{j=-1}^\infty (j+1)(\alpha_j\partial_{\alpha_j}-\bar\alpha_j\partial_{\bar\alpha_j}),
\]
and prove that  $\mathcal{D}_\infty\ol{T}_k = k\ol{T}_k$. Then the variables $t_k$ can be viewed as canonical coordinates or 'orthonormal eigenvector' for the infinite Ablowitz-Ladik Poisson bracket, in the sense that 
\[
\{t_j,t_k\}_\infty = 0,\qquad \{\bar t_j,t_k\}_\infty=i\delta_{jk}.
\]
Consequently, we have $\{F({\bf T}),\ol{T}_k\}_\infty = ik^2\partial_{z_k}F({\bf T})$ assuming the infinite AL Poisson bracket converges. The exact integration-by-parts formulas for the Verblunsky coefficients in Proposition \ref{prop:beta-Stein} then allow us to connect $\ev[\{F({\bf T}),\ol{T}_k\}_\infty]$ to $\ev[F({\bf T})\ol{T}_k]$. The Gaussian limits then follow from Stein's method. 
\end{remark}

\begin{remark}
Using our method, we expect the convergence in Theorem \ref{thm:main} to remain valid when $k=k(n)$ tends to infinity. Indeed, by the estimate of $\ev[|t_k-\mathtt{t}_k^{(n)}|^2]\asymp \sum_{j\ge n}j^{-2}=O(n^{-1})$, we have $\ev[\sum_{j=1}^k|\mathtt{T}_j^{(n)}-T_j|^2]\asymp n^{-1}\sum_{j=1}^k j^2\asymp \frac{k^3}{n}$. This suggests that the convergence \eqref{eq:conv_vec} should still hold when $k=o(n^{1/3})$. However, this rate and range does not improve the result of Johansson-Lambert \cite{JohanssonLambert}, where they prove that the Wasserstein distance $W_2$ between the vector of traces and the corresponding Gaussian vectors is $O(k^2/n)$, which tends to $0$ when $k=o(n^{1/2})$. 
\end{remark}

\section{Appendix}\label{sec:Appendix}
We present the proofs of Propositions \ref{prop:beta-Stein} and \ref{prop:Poisson_id}.
\begin{proof}[Proof of Proposition \ref{prop:beta-Stein}]
    The proof follows from integration by parts. The probability density function of $\alpha$ is given by
\begin{align*}
    \frac{a}{\pi}(1-|z|^2)^{a-1}\ind_{\mathbb{D}}(z).
\end{align*}
Taking the Wirtinger derivatives gives that
\begin{align*}
    \partial_z(1-|z|^2)^{a} = -a\bar z(1-|z|^2)^{a-1},\quad\partial_{\bar a}(1-|z|^2)^{a} = -a z(1-|z|^2)^{a-1}.
\end{align*}
Hence, for any real differentiable functions $f$ and $g$ we have
\begin{align*}
     \ev[\rho^2(\partial_{\bar z}f\cdot g)(\alpha)] &= \frac{a}{\pi}\int_\D \partial_{\bar z}f(z)\cdot \left( (1-|z|^2)^{a}g(z) \right)d^2z\\
     &=-\frac{a}{\pi} \int_\D f(z) \partial_{\bar z}\left( (1-|z|^2)^{a}g(z) \right)d^2z\\
     &=a\ev[\alpha f(\alpha)g(\alpha)]-\ev[\rho^2 (f\partial_{\bar z}g)(\alpha)]. 
\end{align*}
% \begin{align}
%     \ev[\bar \alpha_j f(\alpha_j,\bar\alpha_j)] &= \frac{\beta_j}{\pi}\int_\D  \bar z\cdot f(z,\bar z) (1-|z|^2)^{\beta_j-1}d^2z\notag\\
%     &= \frac{1}{\pi}\int_\D \partial_z(1-|z|^2)^{\beta_j} \cdot f(z,\bar z)d^2z\notag\\
%     &= \frac{1}{\pi}\int_\D \partial_z f(z,\bar z) \cdot (1-|z|^2)^{\beta_j}d^2z\notag\\
%     &=\beta_j^{-1}\ev\left[\rho_j^2\cdot \partial_{\alpha_j}f(\alpha_j,\bar \alpha_j)\right]\label{eq:Stein_beta_1}.
% \end{align}
% where we denote by $\rho_j^2=1-|\alpha_j|^2$.
Rearranging the equation proves \eqref{eq:beta-Stein1}. Taking the Wirtinger derivative $\partial_{\bar z}$ and repeating the same arguments prove \eqref{eq:beta-Stein2}.
\end{proof}

\begin{proof}[Proof of Proposition \ref{prop:Poisson_id}]
    The case when $n=0$ is trivial. We proceed by induction and assume the statements hold for $k\le n$. By the definition of the Poisson bracket we have 
    \[
    \{f,g\}_{n+1} = \{f,g\}_n + i\rho_n^2(\partial_{\bar \alpha_n}f\partial_{\alpha_n}g - \partial_{\alpha_n}f\partial_{\bar\alpha_n}g).
    \]
    Hence, by the Szeg\H o recursion \eqref{eq:Szego1}  we get 
    \begin{align*}
        \{\Phi_{n+1}(z),\Phi_{n+1}(w)\}_{n+1} &= \{z\Phi_n(z)-\bar\alpha_n\Phi_n^*(z),w\Phi_n(w)-\bar\alpha_n\Phi_n^*(w)\}_{n+1}\\
        &=-\bar\alpha_n \left(z\{\Phi_n(z),\Phi_n^*(w)\}_{n+1}-w\{\Phi_n(w),\Phi_n^*(z)\}_{n+1}\right)=0,
    \end{align*}
    where the last step follows from the induction hypothesis \eqref{id:Poisson2} and $\{\Phi_n,\Phi_n^*\}_{n+1}=\{\Phi_n,\Phi_n^*\}_{n}$. Similarly, we have $\{\Phi_{n+1}^*(z),\Phi_{n+1}^*(w)\}_{n+1}=0$.

    It remains to prove \eqref{id:Poisson2} at $n+1$. By Szeg\H o recursion \ref{eq:Szego1} again, we have
    \begin{align}
        \{\Phi_{n+1}(z),\Phi_{n+1}^*(w)\}_{n+1} &= \{z\Phi_n(z)-\bar \alpha_n\Phi_n^*(z), \Phi_{n}^*(w)-\alpha_nw\Phi_n(w)\}_{n+1}\notag\\
        &=z\{\Phi_n(z),\Phi_n^*(w)\}_n-|\alpha_n|^2 w\{\Phi_n(w),\Phi_n^*(z)\}_n+ w\Phi_n^*(z)\Phi_n(w)\{\bar\alpha_n,\alpha_n\}_{n+1}\notag\\
        &=\rho_n^2 \frac{izw}{z-w}(\Phi_n(z)\Phi_n^*(w)-\Phi_n(w)\Phi_n^*(z))+i\rho_n^2w\Phi_n^*(z)\Phi_n(w)\notag\\
        % &=\frac{izw}{z-w}\rho_n^2\Phi_n(z)\Phi_n^*(w)-\frac{iw^2}{z-w}\Phi_n(w)\Phi_n^*(z)\\
        &=\frac{iw}{z-w}\rho_n^2(z\Phi_n(z)\Phi_n^*(w)-w\Phi_n(w)\Phi_n^*(z))\label{eq:indunction}. 
    \end{align}
    On the other hand, the Szeg\H o recursion gives that
    \[ 
    \Phi_{n+1}(z)\Phi_{n+1}^*(w)-\Phi_{n+1}(w)\Phi_{n+1}^*(z)=\rho_n^2(z\Phi_n(z)\Phi_n^*(w)-w\Phi_n(w)\Phi_n^*(z)).
    \]
    Plugging into the right hand side of \eqref{eq:indunction} completes the induction and the proof.  
\end{proof}

% \begin{proposition}\label{prop:rotate} Let $\mu$ be a probability measure on $\partial\D$ with respect to the Verblunsky coefficients $\alpha_{0},\alpha_1\cdots$. Then the Verblunsky coefficients associated with $ \mu^\theta:=e^{i\theta}\mu$ are given that $\alpha^\theta_{j}=e^{-i(j+1)s}\alpha_j$ for $j\ge -1$.
% \end{proposition}
% % \begin{proof}
% % 	This follows from (1.5.80) in \cite{OPUC1}. We have
% % 	\[
% % 	\alpha_k = (-1)^k\frac{D_k(e^{-i\theta}d\mu)}{D_k(d\mu)},\quad 	\alpha_k' = (-1)^k\frac{D_k(e^{-i\theta}d\mu')}{D_k(d\mu')},
% % 	\]
% % 	where
% % 	\[
% % 	D_k(d\mu) = \det(T^{(k+1)}),\quad T^{(k)}=\begin{pmatrix}
% % 	c_0&c_1&\cdots&c_{k-1}\\ c_{-1}&c_0&\cdots&c_{n-2}\\\vdots&\vdots&\cdots&\vdots\\c_{1-n}& c_{2-n}&\cdots&c_0
% % 	\end{pmatrix},\quad c_j=\int e^{-ij\theta}d\mu(\theta).
% % 	\]
% % 	Since $D_k(d\mu)=D_k(d\mu')$, we have
% % 	\[
% % 	\frac{\alpha_k'}{\alpha_k} = \frac{D_k(e^{-i\theta}d\mu')}{D_k(e^{-i\theta}d\mu)}=\frac{\sum_{\sigma\in S_{k+1}}\prod_{j=1}^{k+1}T^{(k+1)}_{j,\sigma(j)}(e^{-i\theta}d\mu')}{\sum_{\sigma\in S_{k+1}}\prod_{j=1}^{k+1}T^{(k+1)}_{j,\sigma(j)}(e^{-i\theta}d\mu)}=e^{-i(k+1)s}.
% % 	\]
% % \end{proof} 

\bibliographystyle{plain}
\bibliography{rmt}

 {{  
 		\bigskip
 		\footnotesize
 		
 		\noindent Yun Li, \textsc{School of Mathematical Sciences, Nankai University, Tianjin 300071, China.}\\
        \textit{Email:} \texttt{liyun1723@hotmail.com}
        
 }}
\end{document}